\title[]{A homotopy formula for $a_q$ domains in complex manifolds
}
\author[]{Xianghong Gong$^{\dagger}$}
\date{\today}
 \address{Department of Mathematics,
 University of Wisconsin-Madison, Madison, WI 53706}
 \email{gong@math.wisc.edu}
\author[]{Ziming Shi$^{\ddagger}$}
\address{Department of Mathematics,
	University of California-Irvine, Irvine, CA 92697}
\email{zimings3@uci.edu}
\thanks{$^{\dagger}$Partially supported by  NSF grant  DMS-2349865. $^{\ddagger}$Partially supported by AMS-Simons Travel Grant.}
\dedicatory{\hfill Dedicated to the memory of Professor Joseph J. Kohn}
 \keywords{Homotopy formulas, $a_q$ domains, H\"older-Zygmund spaces, $\db$-solutions}
 \subjclass[2010]{32F10, 32A26, 32W05}
 \keywords{Homotopy formulas, $\db$ operator, $a_q$ domains, H\"older-Zygmund spaces, compact operators}
 \subjclass[2010]{32F10, 32A26, 32W05}
\newcommand*\bigcdot{\mathpalette\bigcdot@{.75}}
\newcommand*\bigcdot@[2]{\mathbin{\vcenter{\hbox{\scalebox{#2}{$\m@th#1\bullet$}}}}}
\xpatchcmd{\pfn}{.}{\proofpunctuation}{}{}
\xpatchcmd{\pfn}{\itshape}{\prooffont}{}{}
\xpatchcmd{\pfn}{#1}{\proofname\ifx\proofname#1\else #1\fi}{}{}
\newcommand{\proofpunctuation}{:}
\newcommand{\prooffont}{\bfseries}
\xpatchcmd{\pfn}{.}{\proofpunctuation}{}{}
\newtheorem{thm}{Theorem}[section]
\newtheorem{cor}[thm]{Corollary}
\newtheorem{prop}[thm]{Proposition}
\newtheorem{lemma}[thm]{Lemma}
\theoremstyle{definition}
\newtheorem{defn}[thm]{Definition}
\newtheorem{rem}[thm]{Remark}
\renewcommand{\th}[1]{\begin{thm}\label{#1}}
\renewcommand{\eth}{\end{thm}}
\newcommand{\co}[1]{\begin{cor}\label{#1}}
\newcommand{\eco}{\end{cor}}
\renewcommand{\le}[1]{\begin{lemma}\label{#1}}
\newcommand{\ele}{\end{lemma}}
\newcommand{\pr}[1]{\begin{prop}\label{#1}}
\newcommand{\epr}{\end{prop}}
\newcommand{\ga}{\begin{gather}}
\newcommand{\ega}{\end{gather}}
\newcommand{\gan}{\begin{gather*}}
\newcommand{\egan}{\end{gather*}}
\newcommand{\al}{\begin{align}}
\newcommand{\eal}{\end{align}}
\newcommand{\aln}{\begin{align*}}
\newcommand{\ealn}{\end{align*}}
\newcommand{\eq}[1]{\begin{equation}\label{#1}}
\newcommand{\eeq}{\end{equation}}
\newcommand{\ci}{~\cite}
\newcommand{\cc}{{\bf C}}
\newcommand{\rr}{{\bf R}}
\renewcommand{\dbar}{\bar{\partial}}
\newcommand{\cL}{\mathcal}
\newcommand{\re}[1]{(\ref{#1})}
\newcommand{\rea}[1]{$(\ref{#1})$}
\newcommand{\rl}[1]{Lemma~\ref{#1}}
\newcommand{\rp}[1]{Proposition~\ref{#1}}
\newcommand{\rt}[1]{Theorem~\ref{#1}}
\newcommand{\rd}[1]{Definition~\ref{#1}}
\newcommand{\rrem}[1]{Remark~\ref{#1}}
\newcommand{\supp}{\operatorname{supp}}
\newcounter{pp}
\newcommand{\bpp}{\begin{list}{$\hspace{-1em}(\alph{pp})$}{\usecounter{pp}}}
\newcommand{\epp}{\end{list}}
\newcounter{ppp}
\newcommand{\bppp}{\begin{list}{$\hspace{-1em}(\roman{ppp})$}{\usecounter{ppp}}}
\newcommand{\eppp}{\end{list}}
\def\beq{\begin{equation}}
\def\eeq{\end{equation}}
\newcommand{\all}{\alpha}
\newcommand{\del}{\delta}
\newcommand{\Del}{\Delta}
\newcommand{\var}{\varphi}
\newcommand{\e}{\epsilon}
\newcommand{\Om}{\Omega}
\newcommand{\La}{\Lambda}
\newcommand{\ov}{\overline}
\newcommand{\ti}{\tilde}
\newcommand{\yh}{\frac{1}{2}}
\newcommand{\Kc}{\mathcal{K}}
\newcommand{\db}{\dbar}
\newcommand{\bl}{\bigl(}
\newcommand{\br}{\bigr)}
\newcommand{\Bl}{\Bigl(}
\newcommand{\Br}{\Bigr)}
\newcommand{\nn}{\nonumber}
\begin{document}

 \maketitle



\begin{abstract}We construct a global homotopy formula for $a_q$ domains in a complex manifold. The homotopy operators in the formula will  gain $1/2$ derivative in H\"older-Zygmund spaces $\Lambda^{r}$ when the boundaries of the domains are in $\Lambda^{r+3}$ with $r>0$.
\end{abstract}

\setcounter{thm}{0}\setcounter{equation}{0}

\section{Introduction}\label{sect1} 

Let $M=\{\rho<0\}$ be a relatively compact domain in a complex manifold $\cL M$ of dimension $n$, where $\rho$ is a $C^2$ function on $\cL M$.
We say that the domain $M$  satisfies the  {\it condition $a_q$}, if  for each $p$ in the boundary $ bM$,  $d\rho(p)\neq0$ 
and the Levi-form   of $\rho$ on $T^{(1,0)}_p( b M)$ 
  has at least $(n-q)$ positive or $(q+1)$ negative eigenvalues.
Let $V$ be  a holomorphic vector bundle  on $\cL M$. We will denote by
  $\Lambda^r_{q}(M,V)$ the space of    $V$-valued $(0,q)$ forms on $\ov M$ of H\"older-Zygmund class $\Lambda^r$. When $r>0$ is not an integer, $\Lambda^r$ is the standard H\"older space $C^r$; see Section~\ref{sect:preli} for definition when $r$ is a positive integer.  

The main result of this paper is the following.
\th{}\label{thm:decop}Let
$M$ be a relatively compact $a_q$ domain  in a complex manifold  $\cL M$ of dimension $n$. Assume that $1\leq q\leq n-2$, $0<r<\infty$,
and  $b M\in\Lambda^{r_0+3}$ with $r\leq r_0\leq\infty$.
Then there exists a decomposition
\ga{}\label{Hodge-dec}
f=\db P_qf+P_{q+1}\db f+H_qf
\end{gather}
for all $ f\in \Lambda^r_{q}(\ov M, V)$ with $ \db f\in \Lambda^r$.
Furthermore,  we have the following
\bpp  
\item The operators
\al{}
P_{\tilde q}\colon &\Lambda_{\tilde q}^r(M,V)\to  \Lambda_{\tilde q-1}^{r+1/2}(M,V), \quad \tilde q=q,q+1;\\
H_q\colon &\Lambda_{q}^\e(M,V)\to  \cL H_{q}(M,V)\subset\Lambda^{r+1/2}_{q}(M,V),\quad \forall\e>0
\end{align}
are linear and bounded. 
\item $\dim \cL H_{q}(M,V)<\infty$,  $H_q$ is  a projection $($i.e. $H_q^2=I)$, and  each non-zero element $f\in \cL H_{q}(M,V)$ is $\db$-closed and admits no global solution $u\in L^2_{loc}(M,V)$ for $\db u=f$ on $M$.
    \item The $P_{\tilde q}, H_q, \cL H_q(M,V)$ are independent of $f$ and $r$. Consequently, they depend only on $ b M$ and $r_0$;  if $ b M\in C^\infty$ and $f,g$ are in  $C^\infty(\ov M)$, then
$P_{\tilde q}g,H_qf$ are in $ C^\infty(\ov M)$.
\epp
\eth
For a detailed statement including estimates on the norms of $P_q,P_{q+1}$ and $ H_q$, see \rt{defAB+}.  When $H_{\db}^{(0,q)}(M,V)=0$, we  have $\cL H_{(0,q)}(M,V)=0$ by  \ci{MR4866351}*{Thm. 1.1}. Then \re{Hodge-dec} is called a homotopy formula.

\medskip 

We should mention  that condition $a_q$, first appeared in~\ci{MR0179443}, is the same as condition $Z_q$ that plays an important role in the regularity of $\db$-Neumann operator $N_q$ on $(0,q)$ forms on manifolds with smooth boundary (see Kohn-Rossi~\cite{MR0461588} and Folland-Kohn~\cite{MR177135}), which extend earlier work of Kohn on $\db$-Neumann operators on strongly pseudoconvex manifolds with smooth boundary~\cites{MR0153030, MR0208200}. The regularity of $\db$-Neumann operators has an application to the local Newlander-Nirenberg theorem (See \cite{MR0153030}). 
When both $ b M$ and a formal integrable almost complex structure that is a small perturbation of the complex structure on $\cL M$ are smooth ($C^\infty$) and $H^{(0,1)}_{\db}(M,\Theta)=0$ with $\Theta$ being the holomorphic tangent bundle of $\cL M$, Hamilton showed that  the deformed structure can be transformed into the complex structure on $\cL M$ via a $C^\infty$ embedding of $\ov M$ into $\cL M$. Hamilton~\cites{MR0477158, MR594711 } proved this version of global Newlander-Nirenberg theorem by adapting regularity of $\db$-Neumann operators for a family of formally integrable almost complex structures. 

 Our motivation for \rt{thm:decop}  also comes from the study of global Newlander-Nirenberg problem for finite smooth domains and deformed finite smooth complex structures on the domains.
 As mentioned early, when $H^{(0,q)}_{\db}(M, V)=0$, the decomposition \re{Hodge-dec} provides a homotopy formula. In \cites{MR999729, MR995504} Webster introduced the method of homotopy formulas to the study of local Newlander-Nirenberg problem that yields a sharp regularity result and the local CR embedding problem; see Ma-Michel~\ci{MR1263172} and Gong-Webster~\cites{MR2868966} for further results on the local CR embedding. This approach via homotopy formulas was  applied to the study of stability of global CR embedding problem by Polyakov~\cites{MR2088929}.  The homotopy formulas are useful in the study of the global Newlander-Nirenberg problem.  Results in this direction were obtained very recently for strongly pseudoconvex domains in $\cc^n$  by Gan-Gong~\cites{GG} and Shi \cites{MR4853035}.  In~\ci{GS-nn}, the present authors considered the global Newlander-Nirenberg problem for $a_1^+$ domains (i.e. domains satisfying both $a_1$ and $a_2$ conditions). Condition $a_1^+$ is of course stronger than $a_1$,  and this stronger condition allows them to construct a global homotopy formula by using Kohn's regularity result on $\db$-Neumann operators $N_1,N_2$ on $a_1^+$ domains. However, the approach in \cite{GS-nn} does not yield a homotopy formula for $a_1$ or $a_q$ domains. 
 
 As a first step to address the global Newlander-Nirenberg problems on $a_1$ domains beyond Hamilton's work, we construct the homotopy formula for a {\it fixed} domain via \rt{thm:decop}. The proof of the theorem is based on a functional analysis approach. A very general construction of global homotopy formulas through local approximate homotopy formulas and functional analysis   was employed by Leiterer~\ci{MR1453619}. The  functional analysis approach for an analogous problem on embedded compact CR manifolds has been successfully developed by Polyakov~\ci{MR2077422}, and Laurent-Thi\'ebault and Leiterer~\cites{MR1621967, MR2393269}. In this paper, we adapt Polyakov's construction~\ci{MR2077422} for embedded CR manifolds to the case of $a_q$ domains. We believe that  the improvements  in this paper will find its application to the global CR embedding problem studied by Polyakov~\ci{MR2088929}.  
We should mention that there are  different approaches by  Leiterer~\cites{MR1453619},   Laurent-Thi\'ebaut and Leiterer ~\cites{MR1621967,MR2393269}, and Br\"onnle,   Laurent-Thi\'ebaut and Leiterer~\cites{MR2574722} to construct global homotopy formulas for embedded CR manifolds. However, their approaches are different from us. For instance our construction does not rely on the approximate homotopy operators for all small degrees or all large degrees. 
The interested reader can also find recent results on homotopy formulas and more references   in~\cites{MR3961327,MR4866351, MR4244873, MR2829316,MR4289246, MR4688544, SYajm}.
\medskip 

The paper is organized as follows. 

In Section~\ref{sect:glob}, we recall estimates on local approximate homotopy formulas from \ci{GS-nn}. We then use a standard partition of unity procedure to construct a global approximate homotopy formula. 
Our homotopy formula or   decomposition \re{Hodge-dec}  require us to study  compositions of several operators that gain derivatives. To minimize the requirement on the regularity of the boundaries of the $a_q$ domains and facilitate the estimates for the compositions, we will define a simple notion $e_b(T)$ measuring the gain of  derivatives of an  operator $T$ when the boundaries are at least of class $\Lambda^b$; see \rd{e-gain} and \rl{e1e2} for properties of $e_b(T)$.

In Section~\ref{sect3}, we adapt the functional analysis approach by Polyakov~\ci{MR2077422} to our problem. To construct the homotopy formula when $H_{\db}^{(0,q)}(M,V)=0$, we start with an approximate homotopy formula
$$
f-\hat A^{(0)}_qf=\db T_q^{(0)}f+T_{q+1}^{(0)}df
$$
where the regularities of $T_q^{(0)}, T_{q+1}^{(0)}, \hat A_q^{(0)}$ follow from \ci{GS-nn}. 
Then we attempt to invert $A^{(0)}_q:=I-\hat A^{(0)}_q$. This requires us to modify $A^{(0)}_q, T_q^{(0)}, T_{q+1}^{(0)}$  to obtain the new operators $A_q^{(1)}, T_q^{(1)}, T_{q+1}^{(1)}$ and $ T_q^{(2)}, T_{q+1}^{(2)}$ on subspaces of $\Lambda_{\tilde q}^r(M,V)$.
These new operators will be used to construct projection operators onto some finite dimensional spaces arising from the compact operator $\hat A^{(0)}_q$. A preliminary decomposition \re{Hodge-dec} is achieved in  Section~\ref{sect:hf}.

In Section~\ref{sect:improve}, we introduce a procedure to modify Polyakov's construction~\cites{MR2077422} to optimize the regularity gain of the homotopy operators in \rt{thm:decop}.

\setcounter{thm}{0}\setcounter{equation}{0}
\section{Preliminaries} \label{sect:preli}

 In this section, we first  recall elementary properties of H\"older norms $\|\cdot\|_{C^a(M)}$ with $0\leq a <\infty$  and H\"older-Zygmund norms $\|{\cdot}\|_{\Lambda^a(M)}$ with $0< a <\infty$
where $M$ is a Lipschitz   domain.
For abbreviation, we also write $\|\cdot\|_{C^a(M)}$    as $\|\cdot\|_{M,a}$  and   $\|\cdot\|_{\Lambda^a(M)}$  as   $|\cdot|_{M,a}$.

 We will fix a finite open covering $\{U_1,\dots, U_{k_0}\}$ of a neighborhood  of $\overline M$. We may assume $\cL M=\cup U_j$.
For each $k$, we assume
\bppp
\item  there is a holomorphic coordinate map $\psi_k$ from   ${U_k}$ onto  $\hat U_k\Subset \cc^n$, and  $\psi_k$ is biholomorphic on an open set $\tilde U_k$ containing $\ov{U_k}$;
\item
$M_k:=U_k\cap M\neq\emptyset$,
   $\overline{U_k}\subset M$ if $\ov{U_k}\cap b M\neq\emptyset$, and $D_k:=\psi_k(U_k\cap M)$ are Lipschitz domains;
\item there is a partition of unity $\{\chi_k\}$ such that $\supp \chi_k\Subset U_k$, $\chi_k\in C^\infty(\cL M)$, 
     and $\sum\chi_k=1$ on a neighborhood $\cL U$ of $\overline M$, and set $\hat\chi_j=\chi_j\circ\psi_j^{-1}$.
\eppp
We will write $z_k=x_k+iy_k=\psi_k=(z_k^1,\dots, z_k^n)$ as local  holomorphic coordinates on $U_k$. We will write $\nabla^\ell$ as the set of partial derivatives of order $\ell$ in local real coordinates.
When $M'$ is a small $C^2$ perturbation of $M$, the above conditions are still satisfied for $M'$, without changing $\psi_k, U_k, \chi_k, \cL U$.


\medskip

We now use the local charts to define norms.

 Let $r=k+\beta$ with $k\in\nn$ and $0<\beta\leq1$. Let $D$ be a bounded Lipschitz domain in $\rr^d$. Let $\|u\|_{C^r(\ov D)}$  denote the standard H\"older norm for a function $u\in C^r(\ov D)$.   The  H\"older-Zygmund space $\Lambda^\beta( D)$ is the set of functions $ u\in C^0(\ov D)$ such that
$$
\| u\|_{\Lambda^\beta(D)}:=\sup_{x\in D}
| u(x)|+\sup_{x,x+h,x+2h\in D,h\neq0}|h|^{-\beta}|\Delta^2_hu(x)|<\infty.
$$
Here $\Del_hu(x)=u(x+h)-u(x)$ and $\Del^2_hu(x)= u(x+h)+ u(x+2h)-2u(x+h)$. When $r=k+\beta$ with $k\in\nn$ and $0<\beta\leq1$, define
$\|u\|_{\Lambda^r(D)}=\|u\|_{C^k(D)}+\|\nabla^ku\|_{\Lambda^\beta(D)}$.

 For discussions on various equivalent H\"older-Zygmund forms,  see   \ci{MR4866351}*{sect.~5}.

Throughout the paper we assume that $V$ is a holomorphic vector bundle on $\cL M$.
\begin{defn}\label{def-norm-f} When $u$ is a function on $\ov M$, define $u_k=u\circ\psi_k^{-1}$ and
$$
|u|_{M,r}:=\|u\|_{\Lambda^r(M)}:=\max_k|u_k|_{D_k,r}.
$$
Let $f$ be a section $V$-valued $(0,q)$ forms on $\ov M$ where $q=0,1,\dots, n$. Write
$
f|_{U_k}=\sum_{j=1}^m\sum_{|I|=q} \varphi^j_{k,I}(z_k)(d\bar z_k)^I\otimes e_{k,j}(z_k)$.
Set
$$
|f|_{M,r}:=\|f\|_{\Lambda^r(M)}:=\max_{k,I,j}
|f^j_{k,I}|_{D_k,r}.
$$
\end{defn}

 We will use the abbreviation
\ga{}
\Lambda_{q}^s:=\Lambda_{(0,q)}^s(M, V ),\quad \Lambda^+_{q}:=\bigcup_{s>0}\La^s_{(0,q)}(M, V ),\\
 \ti \La^r_q: = \{f\in \La^r_{q} \colon \db f \in \La^r_{q+1}\}, \quad
 \ti \La^+_q: =\bigcup_{r>0}\tilde\Lambda^r_q.
\end{gather}
Note that $\cL Z^s_{q}:=\Lambda^s_{(0,q)}(M, V )\cap \ker \db\subset\tilde\Lambda^s$. We write $\cL Z^+_q=\cup_{s>0}\cL Z^s_q$.

 We have the following facts on norms; see~\cites{MR0602181,MR2829316,MR4866351, GS-nn}.

\pr{zprod}  Let $M$, $\widetilde M$ be  relatively compact Lipschitz  domains in  $\cL M$.
Then for functions $u,v$ and $a,b,c,d\geq0$ we have
\al
\|{uv}\|_{M,a}&\leq C_{a}(\|u\|_{M,a}\|v\|_{M,0}+\|u\|_{M,0}\|v\|_{M,a});
\label{hprule}\\
|{uv}|_{M,a}&\leq C_{a}(|u|_{M,a}\|v\|_{M,0}+\|u\|_{M,0}|v|_{M,a}),\quad a>0;
\label{zprule}\\
\label{habcd-0}
\|u\|_{M,a+b}\|v\|_{\tilde M, c+d}&\leq C_{a,b,c,d}(\|u\|_{ M,a+(b+d)}\|v\|_{\tilde M,c}  + \|u\|_{ M,a}\|v\|_{\tilde M,c+(b+d)});
\\
\label{abcd-0}
|u|_{M,a+b}|v|_{\tilde M, c+d}&\leq C_{a,b,c,d,\e}\|u|_{ M,a+(b+d)-\e/2}\|v\|_{\tilde M,c+\e}\\
&\quad +C_{a,b,c,d,\e}\|u\|_{ M,a+\e}\|v\|_{\tilde M,c+(b+d)-\e/2}, \quad b,d,\e>0.
\nonumber
\end{align}
Here, $C_a,C_{a,b,c,d}, C_{a,b,c,d,\e}$ depend only on the Lipschitz norms of $b M,b\tilde M$.
\epr

\setcounter{thm}{0}\setcounter{equation}{0}
\section{Approximate homotopy formula with compact term} \label{sect:glob}

We will follow the approach of Polyakov~\ci{MR2077422}. The norm estimates rely on \rp{Prop::L0T0_1/2_est}, which is an important ingredient.

Throughout the paper we assume that   $M$ is an $a_q$ domain in $\cL M$.
Let us decompose $bM$ into the disjoint union of $b^+_{n-q}M$ and $b^-_{q+1}M$, where $b^+_{n-q}$ is the union of components of $bM$ on which the Levi-form has at least $(n-q)$ positive eigenvalues. Then $b_{q+1}^-M$  is the union of components of $bM$ on which the Levi-form has at least $(q+1)$ negative eigenvalues.

We need to deal with boundary of $a_q$ domains of which $b_{n-q}^+$ is only $C^2$ while $b_{q+1}^-$ has higher smoothness. Thus we define
$$|\rho|^*_a:=|\rho|^*_{\Lambda^a(\cL N)}:=\max\{\|\rho\|_{C^2(\cL M)},\|\rho\|_{\Lambda^a(\cL N)}\}$$
 for a fixed neighborhood  $\cL N$ of $b_{q+1}^-M$ in $\cL M$. It is easy to see that \re{habcd-0} implies
 \aln{}
 |\rho|^*_{\cL N,a+b}|v|_{\tilde M, c+d}&\leq C_{a,b,c,d,\e}\|\rho\|^*_{ \cL N,a+(b+d)-\e/2}\|v\|_{\tilde M,c+\e}\\
&\quad +C_{a,b,c,d,\e}\|\rho\|^*_{ \cL N,a+\e}\|v\|_{\tilde M,c+(b+d)-\e/2}, \quad b,d,\e>0.
 \end{align*}
To easy notation, we will drop the superscript $\ast$ in $|\rho|^*_{a}$ in some computation.
\begin{prop} \label{conv-est}
Let $r\in(0,\infty)$ and $1\leq q\leq n$. For each $\zeta\in b_{n-q}^+M$, there exist open sets $U,U'$ such that $\zeta\in U'\Subset U$ and for  $f\in\tilde\La^+_{(0,q)}(\ov{M \cap U}) $ we have
\begin{equation}\label{local_hf_cvx}
    f= \db T_q f + T_{q+1} \db f
\end{equation}
on $M \cap U'$, where the homotopy operators $T_q,T_{q+1}$ satisfy
$$ 
\|T_{\tilde q}f\|_{\Lambda^{r+\yh}(\ov {M \cap U'})} \leq C_r(\|\rho\|_2)\|f\|_{\Lambda^r(\ov M\cap U)}.
$$ 
Moreover,  $C_r(\|\rho\|_2)$ is stable under small $C^2$ perturbations of $\rho$,  that is that $C_r(\|\tilde \rho\|_2)$  can be chosen independent of $\tilde\rho$ provided   $\|\tilde\rho-\rho\|_{C^2}<\del$  for some $\del$ depends only on $\rho$.
\end{prop}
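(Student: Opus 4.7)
The plan is to adapt the Henkin--Ramírez Koppelman construction to the $q$-convex setting. At $\zeta\in b^+_{n-q}M$ the Levi form has at least $(n-q)$ positive eigenvalues on $T^{(1,0)}_\zeta(bM)$, so I would first choose holomorphic coordinates $(w',w'')\in\cc^{n-q}\times\cc^{q}$ centered at $\zeta$ in which $d\rho(\zeta)\neq 0$ and the Levi form of $\rho$ is positive definite on the $w'$-directions at $\zeta$. After this normalization, the truncated Levi polynomial in the $w'$-variables
$$
F(\zeta,z)=-\sum_{j=1}^{n-q}\frac{\partial\rho}{\partial w_j}(\zeta)(w_j(z)-w_j(\zeta))-\frac12\sum_{j,k=1}^{n-q}\frac{\partial^2\rho}{\partial w_j\partial w_k}(\zeta)(w_j(z)-w_j(\zeta))(w_k(z)-w_k(\zeta))
$$
is holomorphic in $z$ and, on a sufficiently small neighborhood $U$ of $\zeta$, satisfies the Henkin--Ramírez type lower bound $\RE F(\zeta,z)\geq c(|w'(z)-w'(\zeta)|^2-\rho(z)+\rho(\zeta))$ along the portion of the Leray data that enters the kernel.

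I would then assemble a Koppelman--Leray kernel by gluing the Henkin section derived from $F$ in the $w'$-directions with the Bochner--Martinelli section in the transverse $w''$-directions via the standard convex combination. Integration of $f$ against the corresponding $(0,\tilde q)$-component $K_{\tilde q}(\zeta,z)$ defines the candidate operators $T_q,T_{q+1}$. The identity \eqref{local_hf_cvx} on $M\cap U'$ then follows from the Koppelman--Leray formula together with Stokes' theorem, using holomorphy of the $F$-factor in $z$ to kill the boundary contribution over $bM\cap U$.

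The main step is the regularity estimate. I would invoke the anisotropic Hölder--Zygmund bounds for Henkin-type kernels developed in \cite{GS-nn}: the $w'$-factor produces the classical Henkin--Romanov $1/2$-gain on the $(n-q)$-dimensional strictly pseudoconvex slices, while the $w''$-factor acts as a Bochner--Martinelli convolution which is at least as smoothing. Combining these by a slice-and-integrate argument, together with the cancellation estimates in \cite{GS-nn}, should yield the bound $\|T_{\tilde q} f\|_{\Lambda^{r+\yh}(\ov{M\cap U'})}\leq C_r\|f\|_{\Lambda^r(\ov{M\cap U})}$ with $C_r$ depending only on $\|\rho\|_{C^2}$ and the sizes of $U',U$. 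I expect the main technical obstacle to be the uniform control of the anisotropic kernel singularity so that the full $1/2$-gain in $\Lambda^{r+1/2}$ survives the $w''$-slicing --- ordinary isotropic Hölder estimates give a weaker gain, and one needs the careful cancellation/difference-quotient arguments of \cite{GS-nn} to reach $r+\yh$ for all $r>0$ using only the $C^2$-norm of $\rho$.

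Stability under $C^2$ perturbations is then essentially automatic: the kernel $K_{\tilde q}$ depends on $\rho$ only through $\rho,\partial\rho,\partial^2\rho$ together with the $C^2$-continuous normalization of coordinates, and the lower bound on $\RE F$ persists under any $\tilde\rho$ with $\|\tilde\rho-\rho\|_{C^2}<\delta$ for $\delta$ sufficiently small, depending only on $\rho$. Hence $C_r(\|\tilde\rho\|_2)$ can be chosen uniformly on such a $C^2$-neighborhood of $\rho$.
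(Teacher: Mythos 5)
Your reliance on \cite{GS-nn} for the H\"older--Zygmund $1/2$-gain and on the $C^2$-dependence of the kernel for stability is in the spirit of the paper, which in fact does not construct the operators at all: it quotes \cite{MR4866351}*{Theorem 3.5} for the formula \re{local_hf_cvx} and \cite{GS-nn}*{Theorem 4.5} for the estimate. The gap is in the kernel construction you sketch. First, the truncated Levi polynomial you write down omits the first-order terms $\partial\rho/\partial w_j(\zeta)\,(w_j(z)-w_j(\zeta))$ for $j>n-q$; since $\rho(z)-\rho(\zeta)$ has first-order contributions from the $w''$-directions (in particular from the complex normal direction, which lies outside the span of the $n-q$ positive tangential eigendirections), no bound of the form $\RE F\gtrsim |w'(z)-w'(\zeta)|^2-\rho(z)+\rho(\zeta)$ can hold, even ``along the Leray data''. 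The genuine constructions keep the full linear term $\sum_{j=1}^n\partial_{w_j}\rho(\zeta)(w_j(z)-w_j(\zeta))$, truncate only the quadratic part, and modify $\rho$ (e.g.\ by $\rho+A\rho^2$ or an exponential convexification) to gain positivity in the normal direction; without this your section need not be a support function on $\ov{M\cap U}$.

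Second, and more seriously, the mechanism you invoke to kill the boundary integral cannot be right. If the glued Leray section were effectively holomorphic in $z$, the boundary Cauchy--Fantappi\`e component would vanish in \emph{all} degrees $\geq 1$, yielding a local homotopy formula in every degree --- impossible at a boundary point where the Levi form also has negative eigenvalues (local $\db$-solvability fails in degrees $<q$). In reality the vanishing in degree exactly $q$ comes from confining the anti-holomorphic $z$-dependence of the section to at most $q-1$ directions (the bad \emph{tangential} directions), so that the factor $(\db_z w)^{s}$ in the degree-$(0,s)$ boundary kernel vanishes for $s\geq q$. Your gluing puts Bochner--Martinelli dependence $\bar z''$ on all $q$ transverse variables, including the normal one; then $(\db_z w)^{q}\neq 0$ in general and the boundary term survives precisely in degree $q$, so your construction only gives \re{local_hf_cvx} for degrees $\geq q+1$. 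To reach degree $q$ you must treat the complex normal direction holomorphically (via the full linear term above) and reserve the anti-holomorphic dependence for the $q-1$ remaining tangential directions; this is exactly what the construction cited in \cite{MR4866351} does, and it is the step your sketch misses.
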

\begin{proof}
   The homotopy formula \re{local_hf_cvx} is given by \cite[Theorem 3.5]{MR4866351}. The estimates for $T_q, T_{q+1}$ are given by \cite[Theorem 4.5]{GS-nn}.
\end{proof}

\begin{prop} \label{conc-est}
Let $r\in(0,\infty)$, $\e>0$ and $0<\delta\leq1/2$. Assume that $ 0 
\leq q
\leq n-2$. For each $\zeta\in b_{q+1}^-M$, there exist open sets $U,U'$ such that $\zeta\in U'\Subset U$ and   for $f\in\tilde\La^+_{(0,q)}(\ov{M \cap U}) $ we have
\begin{equation} \label{loc_app_hf_concave}
   f  = \db T_q  f  + T_{q+1} \db  f  + \hat A _q f
\end{equation}
on $M \cap U'$.  
Here we set $T_0f=0$ when $f$ is a function.
The operators $ \hat A_q$ and $T_{\tilde q}$ with $\tilde q=q,q+1$  satisfy  
\aln{}
\|T_{\tilde q}  f \|_{\La^{r+\delta}(\ov{M \cap U'})}&\leq   C_{r,\delta,\e} 
(\|\rho\|_2) \bl \|\rho\|^*_{\La^{r+2+\delta}}\| f \|_{C^\e(\ov{M}\cap U)}+  
\| f \|_{\La^r(\ov M\cap U)} \br,
\\ 
\| \hat A_q  f  \|_{\Lambda^{r}(\ov{M \cap U'})} &\leq C_{r,\e}( \|\rho\|_2)\|\rho\|^*_{\La^{r+2}}  \|  f  \|_{C^\e(\ov M\cap U)}.
\end{align*}
 Moreover,  $C_{r,\delta,\e}(\|\rho\|_2), C_{r,\e}(\|\rho\|_2)$   are  stable under small $C^2$ perturbations of $\rho$.
\end{prop}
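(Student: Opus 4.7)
Fix $\zeta_0 \in b_{q+1}^- M$. After shrinking $U$ and a unitary change of local holomorphic coordinates, the complex Hessian of $\rho$ at $\zeta_0$ restricted to $T^{(1,0)}(bM)$ becomes diagonal with the first $q+1$ diagonal entries strictly negative. The plan is to introduce a Henkin-type Levi polynomial $\Phi(z,\zeta)$ whose coefficients involve $\partial\rho$ and $\partial\ov\partial\rho$, and to verify that with the splitting $z=(z',z'') \in \cc^{q+1}\times\cc^{n-q-1}$ one has
\[
2\RE \Phi(z,\zeta) \geq -\rho(\zeta)+\rho(z)+c|z'-\zeta'|^2 - C|z''-\zeta''|^2
\]
for $(z,\zeta) \in \ov{M \cap U}\times(\ov M \cap U)$ near the diagonal. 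After shrinking $U'$ the function $\Phi$ is therefore nonvanishing on $(\ov{M \cap U'}\times\ov{M \cap U})\setminus\{z=\zeta\}$.

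Next I would combine $\Phi$ with the Bochner-Martinelli section in the $z''$-variables to build a Leray section $s(z,\zeta)$ and assemble the corresponding Cauchy-Fantappiè form $\Omega(z,\zeta)=\Omega_{q-1}(z,\zeta)+\Omega_q(z,\zeta)$, where $\Omega_{\tilde q}$ has bidegree $(0,\tilde q)$ in $z$. The Koppelman-Stokes identity applied to $\chi(\zeta)f(\zeta)\wedge\Omega(z,\zeta)$ integrated over $\ov{M \cap U}$, with $\chi \in C_c^\infty(U)$ equal to $1$ on $U'$, produces the decomposition $f = \db T_q f + T_{q+1}\db f + \hat A_q f$ on $M \cap U'$. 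Here $T_{\tilde q}$ is the interior integral against $\Omega_{\tilde q-1}$, while $\hat A_q f$ collects the boundary integral over $bM \cap U$, which does not cancel because $\Phi$ fails to be holomorphic in $z$, together with the $d\chi$-commutator contribution; both of these have kernels that are smooth in $z \in M \cap U'$ away from the support boundary of $\chi$.

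For the H\"older-Zygmund bounds I would apply the fractional integration estimates of \cite[Theorem 4.5]{GS-nn}: the kernel denominator $\Phi^q|z-\zeta|^{2n-2q-1}$ yields the $\delta$-gain up to $\delta=\yh$, while the numerator carries factors of $\na^2 \rho$. Using the product estimate \re{abcd-0} to transfer the $\delta$-derivatives onto the $\rho$-coefficients at the cost of replacing $f$ by its $C^\e$-norm gives the term $\|\rho\|^*_{\La^{r+2+\delta}}\|f\|_{C^\e}$; the complementary split yields the remainder $\|f\|_{\La^r}$ in which $\rho$ only enters through its $C^2$-norm. The operator $\hat A_q$ is smoothing in $z$ with a singular factor involving at most $\na^2\rho$, so the bound $\|\hat A_q f\|_{\La^r} \leq C\|\rho\|^*_{\La^{r+2}}\|f\|_{C^\e}$ follows by the same product estimate.

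The main obstacle is exactly this bookkeeping: the $\delta$-gain of $T_q, T_{q+1}$ must survive multiplication by coefficients of $\Phi$ that depend on $\rho$ only in $\La^{r+2+\delta}$, rather than the cruder $\La^{r+3}$ that a naive estimate would force — the sharper dependence is essential for the compactness arguments of Section~\ref{sect3}. Stability of all constants under small $C^2$-perturbations of $\rho$ is automatic because the Levi form having $(q+1)$ negative eigenvalues and the bound on $2\RE \Phi$ are $C^2$-open conditions, and $\Phi, \Omega$ depend continuously on $\rho$ in the relevant norm.
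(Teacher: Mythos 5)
The paper does not reprove this proposition: it quotes the approximate homotopy formula from \cite{MR4866351}*{(4.7)} (with $\hat A_q$ being the boundary-integral term $L^{12}_{12;q}$ there) and the H\"older--Zygmund estimates from \cite{GS-nn}*{Theorem 4.12}. Your proposal instead tries to rebuild the local formula from scratch, and the key step of that reconstruction does not work. The inequality you claim, $2\RE\Phi(z,\zeta)\geq -\rho(\zeta)+\rho(z)+c|z'-\zeta'|^2-C|z''-\zeta''|^2$, has an indefinite right-hand side because of the term $-C|z''-\zeta''|^2$, so it does not imply that $\Phi$ is nonvanishing off the diagonal, and hence the Leray section and Cauchy--Fantappi\`e form you build on it are not well defined. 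This is not a fixable sign slip: at a $(q+1)$-concave point there is no Henkin-type barrier making the boundary term vanish, which is exactly why only an \emph{approximate} homotopy formula holds. The boundary term $\hat A_q$ is not there ``because $\Phi$ fails to be holomorphic in $z$''; it survives because in the concave setting the relevant bidegree component of the boundary kernel cannot be removed, and the whole point of the $a_q$ hypothesis ($q+1$ negative eigenvalues, $q\leq n-2$) is that this component is smoothing, so that $\hat A_q$ gains derivatives and is compact. That mechanism --- the actual content of \cite{MR4866351}*{(4.7)} --- is absent from your sketch, and you also cite the convex-case estimate (\cite{GS-nn}*{Theorem 4.5}) where the concave-case one (Theorem 4.12) is needed, which suggests the two cases have been conflated.

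A second, smaller gap: the estimate with the sharp factor $\|\rho\|^*_{\Lambda^{r+2+\delta}}\|f\|_{C^\e}+\|f\|_{\Lambda^r}$ is not a consequence of the interpolation inequality \re{abcd-0} applied to a generic kernel bound; it requires the specific integration-by-parts and kernel-decomposition arguments of \cite{GS-nn}, which is why the paper cites that theorem rather than rederiving it. If you want a self-contained proof you must either reproduce those arguments for the concave kernels or, as the paper does, invoke \cite{MR4866351} and \cite{GS-nn} directly; as written, the proposal neither proves the formula \re{loc_app_hf_concave} nor the stated norm bounds.
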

\begin{proof}
    The approximate homotopy formula \re{loc_app_hf_concave} is given by \cite[4.7]{MR4866351} where we take $\hat A_q$ to be the $L^{12}_{12;q}$ in that formula. The estimate for $\hat A_q$ follows by the fact that it is a boundary integral. The estimates for $T_q,T_{q+1}$ are given by \cite[Theorem 4.12]{GS-nn}.

     We remark that Laurent-Thi\'ebaut and Leiterer~\cite{MR1621967}*{Prop.~0.7}  proved that there is no {\it local} homotopy formula (i.e with $\hat A_q=0$) near a $(q+1)$ concave boundary for $(0,q)$ forms that has {\it good} estimates such as the ones stated above.
\end{proof}

An importance of   condition $a_q$ is that operator $\hat A^{(0)}_{q'}=I-\db T_{q'}-T_{q'+1}\db$ gains derivatives and hence it is compact in a small neighborhood of a $(q+1)$ concave boundary point for $0\leq q'\leq q$. In a small neighborhood of an $(n-q)$ convex boundary point,  $\hat A_{q'}$ is also compact (in fact vanishing) when $q\leq q'\leq n$. We will explore the two features in different ways in section~\ref{sect:improve}.

If $M$ is $a_q$, we can let $\{U_j, \chi_j\}$ be a partition of unity, with  $\cup U_j\Supset M$, $\sup \chi_j\Subset U'_j$, and $\sum\chi_j=1$ on $\ov M$ such that there exist the local approximate homotopy formulas (given by the above propositions)
$$
f= \db T^{(0)}_{j;q}
f+T^{(0)}_{j;q+1} \db f+\hat A^{(0)}_{j;q}f, \quad f\in\tilde \Lambda_q^+(M\cap U_j).
$$
Define
\begin{gather} \label{T0_exp}
T^{(0)}_qf:=\sum\chi_j T^{(0)}_{j;q}f,\quad
T^{(0)}_{q+1}g:=\sum\chi_j T^{(0)}_{j;q+1}g,
\\ \label{L0_exp}
\hat A^{(0)}_qf:=\sum\chi_j \hat A^{(0)}_{j;q}f- \sum  \db \chi_j\wedge T^{(0)}_{j;q}f.
\end{gather}
Then we have
$$ 
f=\db T^{(0)}_q f+T^{(0)}_{q+1} \db f+\hat A_q^{(0)}f, \quad f\in\tilde\Lambda^+_q(M\cap U_j).
$$ 
Define
\eq{A0f-def}
A_q^{(0)}f:=f-\hat A_q^{(0)}f.
\eeq
To simplify notation, we will drop the subscript $q$ in $A_q^{(0)},\hat A_q^{(0)}$ for most of the time.

Having defined these operators, we now study their properties.  
\begin{prop}\label{Prop::L0T0_1/2_est}
Let $M\Subset \cL M$ be an $a_q$ domain satisfying \rea{ss'}-\rea{ss'-M}. Assume that $1\leq q\leq n-2$   and $0<\del\leq1/2$.
Then we have  for $r>0$
\begin{equation} \label{T0L0_est}
\begin{aligned}
  \|T^{(0)}_{\tilde q} g\|_{\La^{r+\delta}(\ov{M})}&\leq C_{r,\delta,\e} (\rho) \bl |\rho|^*_{\La^{r+2+\delta}}\| g \|_{C^\e(\ov{M})}+  \| g \|_{\La^r(\ov M)} \br,\quad \tilde q=q,q+1;
 \\
 \|\hat A_q^{(0)}f  \|_{\La^{r+\delta}(\ov{M})}&\leq C_{r,\del} (\rho) \bl \|\rho\|^*_{\La^{r+2+\delta}}\| f \|_{C^\e(\ov{M})}+  \| f \|_{\La^r(\ov M)} \br.
\end{aligned}
\end{equation}
We also have
\ga{}\label{A0f}
 A_q^{(0)}f = \db T^{(0)}_q f+T^{(0)}_{q+1} \db f,\quad \text{on $\ti \La^+_q$},\\
\label{T0qt}
T^{(0)}_q(\ti \La^+_q)\subset \ti \La^+_q.
\end{gather}
\bpp
\item  For $s>0$ the space
$\cL Z_q^s$ is  closed in $\Lambda^s_{q}$, and
$$ 
\quad
\db T^{(0)}_q(\ti \La^+_q)\subset\cL Z^+_{q},\quad A^{(0)} (\cL Z^s_{q})\subset\cL Z^s_{q}.
$$ 
\item
Assume further that
\eq{compc}0 <r < r_0+\e,\quad  b M\in\Lambda^{r_0+2+\e},\quad  0<\e\leq 1/2.
\eeq
Then $\hat A_q^{(0)}= I-A_q^{(0)}$ is a compact operator on $\La_{q}^r$, and $A_q^{(0)}$
is a Fredholm operator of index $0$ on $\Lambda^r_q$.
\epp
\end{prop}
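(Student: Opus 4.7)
I would verify the five claims in order, reducing each either to the local formulas of Propositions \ref{conv-est} and \ref{conc-est} or to a standard fact from functional analysis. The estimates \re{T0L0_est} come from summing the local bounds on each patch. Multiplication by the smooth cutoffs $\chi_j$ is bounded on every H\"older--Zygmund space, so the bound for $T^{(0)}_{\tilde q}$ follows directly by using Proposition~\ref{conv-est} on convex patches (where no $\hat A$ term is present) and Proposition~\ref{conc-est} on concave patches. The bound for $\hat A^{(0)}_q=\sum_j\chi_j\hat A^{(0)}_{j;q}f-\sum_j\db\chi_j\wedge T^{(0)}_{j;q}f$ is slightly more delicate: the correction pieces $\db\chi_j\wedge T^{(0)}_{j;q}f$ inherit the $\delta$-gain from $T^{(0)}_{j;q}$, while the main concave pieces are handled by invoking Proposition~\ref{conc-est} at the higher index $r+\delta$, which is precisely where $\|\rho\|^*_{\Lambda^{r+2+\delta}}$ enters.

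Next, for \re{A0f} and \re{T0qt}, I would start from the Leibniz identity
\begin{equation*}
  \db(\chi_j T^{(0)}_{j;q}f)=\db\chi_j\wedge T^{(0)}_{j;q}f+\chi_j \db T^{(0)}_{j;q}f
\end{equation*}
and substitute $\chi_j\db T^{(0)}_{j;q}f=\chi_j f-\chi_j T^{(0)}_{j;q+1}\db f-\chi_j\hat A^{(0)}_{j;q}f$ from the local formulas. Summing over $j$ and using $\sum_j\chi_j=1$ on $\ov M$ together with the definitions \re{T0_exp}, \re{L0_exp}, \re{A0f-def} yields $A^{(0)}_q f=\db T^{(0)}_q f+T^{(0)}_{q+1}\db f$ on $\tilde\Lambda^+_q$. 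Rearranging gives $\db T^{(0)}_q f=A^{(0)}_q f-T^{(0)}_{q+1}\db f\in\Lambda^+$, which proves $T^{(0)}_q f\in\tilde\Lambda^+_q$.

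For part (a), the closedness of $\cL Z^s_q$ in $\Lambda^s_q$ follows from the continuity of $\db\colon\Lambda^s\to\cL D'$, so that $\ker\db$ is closed. The inclusion $\db T^{(0)}_q(\tilde\Lambda^+_q)\subset\cL Z^+_q$ is immediate from $\db^2=0$ combined with the regularity of $\db T^{(0)}_q f$ just obtained, and $A^{(0)}(\cL Z^s_q)\subset\cL Z^s_q$ follows from \re{A0f}: when $\db f=0$ one has $A^{(0)}f=\db T^{(0)}_q f$, which is $\db$-closed, while its $\Lambda^s$-regularity comes from $A^{(0)}f=f-\hat A^{(0)}_q f$.

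For part (b), I would choose $\delta>0$ with $r+\delta\leq r_0+\e$, which is possible under \re{compc} and makes $\|\rho\|^*_{\Lambda^{r+2+\delta}}$ finite. Then \re{T0L0_est} says $\hat A^{(0)}_q\colon\Lambda^r_q\to\Lambda^{r+\delta}_q$ is bounded; since $M$ is a bounded Lipschitz domain, the inclusion $\Lambda^{r+\delta}\hookrightarrow\Lambda^r$ is compact by Arzel\`a--Ascoli, so $\hat A^{(0)}_q$ is compact on $\Lambda^r_q$ and $A^{(0)}_q=I-\hat A^{(0)}_q$ is Fredholm of index $0$ by the Riesz--Schauder theorem. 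The main nuisance throughout is bookkeeping: one must verify carefully that the correction terms $\db\chi_j\wedge T^{(0)}_{j;q}f$ are genuinely controlled by $\|\rho\|^*_{\Lambda^{r+2+\delta}}$ and do not force a larger regularity index on $\rho$.
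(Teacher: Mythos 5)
Your proposal is correct and follows essentially the same route as the paper: the global estimates and the identity $A^{(0)}_qf=\db T^{(0)}_qf+T^{(0)}_{q+1}\db f$ come from the local formulas of Propositions~\ref{conv-est} and \ref{conc-est} glued by the partition of unity (your Leibniz computation is exactly how \re{T0_exp}--\re{L0_exp} are designed), $\db T^{(0)}_qf\in\cL Z^+_q$ follows from $\db T^{(0)}_qf=A^{(0)}f-T^{(0)}_{q+1}\db f\in\Lambda^+$ and $\db^2=0$, and compactness plus Fredholmness of index $0$ follow from the gain of $\min\{\e,r_0+\e-r\}$ derivatives under \re{compc} together with the compact embedding $\Lambda^{r+\delta}\hookrightarrow\Lambda^{r}$ and Riesz--Schauder, exactly as in the paper. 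Your write-up in fact supplies details (the cutoff bookkeeping and the compact embedding) that the paper leaves implicit, so no gap.
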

\begin{proof}%
One can verify \re{T0L0_est}-\re{T0qt} easily by Propositions~\ref{conv-est} and \ref{conc-est}.

For $f\in \tilde{\Lambda}_q^+$, we have $\db T_q^{(0)}f = A^{(0)}f-T_{q+1}^{(0)} \db f \in\Lambda^+_q$. Then $\db^2=0$ implies that $\db T_q^{(0)}f\in\cL Z^+_q$.

Assume that $0<r<r_0+\e$. Since $\hat A_q^{(0)}$ gains $\min\{\e,r_0+\e-r\}$ derivatives on $\Lambda^r_q$, then it is compact on $\Lambda^r_q$.
\end{proof}

We will need to derive estimates of the form \re{T0L0_est} repeatedly. Notice that the two estimates in \re{T0L0_est} have the same form. Thus, we introduce notion of an operator $T$ gaining   derivatives.
\begin{defn}\label{e-gain}We say that a linear map $T$ gains $\e$ derivative on $\Lambda_q^+$ for $ bM\in \Lambda^{b}\cap C^2$
with $b\geq2$, if the following hold
\bpp\item If for  $0\leq \e'\leq \e$,
$T$ maps $\Lambda^s_{q}( M,V)$ into $ \Lambda^{s+\e'}_{q'}( M,V)$ if  $s>0$ and $ bM\in\Lambda^{b+s+\e'}$.
\item
There exists $\tau\geq0$ such that for all  $ s>0$ and any (small) $ \e''>0$, we have
\al{}
|Tf|_{s+\e'}&\leq C_{\e'',\e',s} (|\rho|^*_{b+s+\e'}|\rho|_{2+\e''}^{*\tau}|f|_{\e''}+|f|_s),\quad \forall\e'\in [0,
 \e].
\label{sube2}
\end{align}
\epp 
When $(a),(b)$ hold, let us write
$$
e_b(T)\geq \e
$$
to indicate the dependence of $\e$ on $b,T$. Let us also
 write $e_b(T)=\infty$, 
 if for all $\e''>0$,  $s>0$ we have
 $$
 |Tf|_{s}\leq C_{\e'',s} |\rho|^*_{b+s}|\rho|_{2+\e''}^{*\tau}|f|_{\e''}.
 $$
\end{defn}
We remark that \rp{Prop::L0T0_1/2_est} requires that both $r$ and $\delta$ be positive. 
Note that with the assumption $s>0$ in the definition,  we have deliberately introduce the $|\rho|^\tau_{2+\e''}$ with $\e''>0$ in \re{sube2} to allow $e_b(T)=0$, i.e. no gain in derivative. This weaker form will also be convenient when we use convexity of H\"older norms (see for instance \re{abcd} below) that is not available to the Zygmund norms, as well as estimates on local approximate homotopy formula in \rp{conc-est} that requires  forms to be in $\Lambda^\e$ and $M\in\Lambda^{2+\e}$ with $\e>0$.

With this weaker form \re{sube2}, \rp{conc-est} implies that
$$ 
e_{2}(T^{(0)}_{\tilde q})\geq 1/2, \quad \tilde q=q,q+1; \qquad e_2(\hat A^{(0)}_q)\geq 1/2.
$$ 
We now prove the following lemma which will be used throughout the paper to deal with a finite composition of operators.
\le{e1e2}Assume that for $i=1,2,$  $T_i\colon \Lambda^+_{q_i}(\Om,V)\to \Lambda^+_{q'_{i}}(\Om,V)$ satisfies $e_{b_i}(T_i)\in[0,1/2]$ with $b_i\geq2$. Then whenever $T_1+T_2$ or $T_1T_2$ are defined, we have \ga{}
e_{\max(b_1,b_2)}(T_1+T_2)\geq \min(e_{b_1}(T_1),e_{b_2}(T_2));\\
\label{et12}
e_{b_1+b_2-2}
(T_2T_1)\geq e_{b_1}(T_1)+e_{b_2}(T_2);\\
e_{b_1+1}(dT_1)\geq\tilde \e_1, \quad \text{when $e_{b_1}(T_1)\geq 1+\tilde \e_1\geq1$}.\label{b11}
\end{gather}
In particular, if $e_{b_1}(T_1)=\infty$, then $e_{b_1+1}(dT)=\infty$.
\ele
\begin{proof}When $b\geq\e/2$ and $d\geq\e/2$ the convexity of H\"older norms implies that
\eq{abcd}
\|u\|_{a+b}\|v\|_{c+d}\leq C_{a,b,c,d}(\|u|_{a+(b+d)-\e/2}\|v\|_{c+\e/2}+\|u\|_{a+\e/2}\|v\|_{c+(b+d)-\e/2})
\eeq
and hence  if $\e>0$ additionally,
\eq{abcd-z}
\|u\|_{a+b}\|v\|_{c+d}\leq C_{a,b,c,d,\e}(|u|_{a+b+d}|v|_{c+\e}+|u|_{a+\e}|v|_{c+b+d}).
\eeq
To verify \re{et12}, we first apply  the definition to get a crude estimate
$$
|T_1f|_{\e''}\leq C_{\e''}(|\rho|^*_{b_1+\e''}|\rho|^{*\tau_1}_{2+\e''}|f|_{\e''}+|f|_{\e''})\leq 2C_{\e''}|\rho|^*_{b_1+\e''}|\rho|^{*\tau_1}_{2+\e''}|f|_{\e''}.
$$
Then we have for $\e_3=\e_1+\e_2$ with $\e_i\leq e(T_i)$
\aln{}
&|T_2T_1f|_{s+\e_1+\e_2}\leq C_{\e',\e_2,s}(|\rho|^*_{b_2+s+\e_1+\e_2}|\rho|_{2+\e''}^{\tau_2}|T_1f|_{\e''}+|T_1f|_{s+\e_1})\\
& \leq C_{\e',\e_2,s}(|\rho|_{b_2+s+\e_1+\e_2}|\rho|_{2+\e''}^{*\tau_2+\tau_1}|\rho|^*_{b_1+\e''} |f|_{\e''}+|\rho|_{2+\e''}^{\tau_1}|\rho|^*_{b_1+s+\e_1}|f|_{\e''}+
|f|_{s}).
\end{align*} 
If $b_1=2$, we get the desired estimate immediately. Suppose $b_1>2$. Recall that $s>0$. Hence, $b_2+s>2$. Let us write $b_2+s+\e_1+\e_2=(2+3\e'')+b_2+s-2+\e_1+\e_2-3\e''$ and $b_1+\e''=(2+3\e'')+(b_1-2-2\e'')$. Note that
$$
|\rho|^*_{b_2+s+\e_1+\e_2}|\rho|^*_{b_1+\e''}\leq C_{b_1,b_2,\e}\|\rho\|^*_{b_2+b_1-2+s+\e_1+\e_2-\e''}\|\rho\|^*_{2+3\e''}.
$$
We have verified \re{et12}. We have \re{b11} as
$$
|dT_1f|_{s+\tilde \e_1}\leq |T_1f|_{s+1+\tilde \e_1}\leq C_{\tilde \e_1,\e_2,s}(|\rho|^*_{b_1+s+1+\tilde \e_1}|\rho|_{2+\e''}^{*\tau_1}|f|_{\e''}+|f|_{s}).
\qedhere
$$
\end{proof}
 
 Formula \re{et12} is especially useful since in our application we will consider composition $T_1\cdots T_k$ with  $e_{b_j}(T_j)$ satisfying $b_j=2$ for all $j$, except at most one.

 Next, we estimate the gain of derivatives for  the inverse operator of $I-\hat A$ when $\hat A$ gains derivatives.
\begin{prop}\label{hatB-est}
Assume that $M$ and $\hat A\colon \Lambda^+_{q}(M,V)\to \Lambda^+_{q}(M,V)$  satisfy
$$ 
e_b(\hat A)\geq\e>0, \quad bM\in \Lambda^{b+\e},  \quad b\geq2.
$$ 
Let $\e''>0$. Then $A=I-\hat A$ satisfies the following:
\bpp\item $\dim \ker A<\infty$. If $h\in\ker A$, then
\eq{hse}
|h|_{s+a}\leq    C_{s,\e'',b,a}(\|\rho\|_2)|\rho|_{2+\e''}^{*\tau} |\rho|^*_{s+b+a}|h|_{\e''},\quad \forall a>0.
\eeq
In particular, for a    bounded projection $\Pi\colon\Lambda_q^{\e''}\to\ker A$, we have  $e_b(\Pi_{\ker A})=\infty$ with
$$
|\Pi_{\ker A}f|_{s+\e}\leq C_{s,a,b,\e,\e''}(\|\rho\|_2)|\rho|_{2+\e''}^{*\tau} |\rho|^*_{s+b+\e} |\Pi_{\ker A}f|_{\e''}.
$$
\item If $\ker A=\{0\}$, then $A^{-1}=I-\hat B\colon \Lambda^+_{q}(\Om,V)\to \Lambda^+_{q}(\Om,V)$ has $e_b(\hat B)\geq\e$.
More specifically, we have
$$
|\hat B_qg|_{s+\e}\leq C_{s,\e,\e'',\tau}
(\|\rho\|_2) 
(|\rho|^{*\tau}_{2+\e''}|\rho|^*_{s+2+\e}(1+|A^{-1}|_{\e''})|g|_{\e''}+|g|_{s}),\quad \forall s>0,
$$
where $|A^{-1}|_{\e''}=\sup\{|A^{-1}f|_{\e''}\colon |f|_{\e''}=1\}$.
\epp
\end{prop}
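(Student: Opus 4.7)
My plan is to treat (a) and (b) via compactness together with an iterated application of the gain estimate defining $e_b(\hat A) \geq \e$. Since $\hat A$ maps $\Lambda^s_q$ into $\Lambda^{s+\e}_q$ continuously whenever $bM$ has the required regularity, and the embedding $\Lambda^{s+\e}_q \hookrightarrow \Lambda^s_q$ on the bounded Lipschitz domain $M$ is compact, $\hat A$ is compact on each such $\Lambda^s_q$, so $A = I - \hat A$ is Fredholm of index $0$ by Riesz--Schauder. This at once gives $\dim \ker A < \infty$, and under the hypothesis of (b) it also gives bijectivity of $A$ on each $\Lambda^s_q$ in the admissible range.

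For (a), let $h \in \ker A$. The identity $h = \hat A h$ combined with $e_b(\hat A) \geq \e$ gives, for every integer $k \geq 1$ with $s+a-k\e > 0$,
\begin{equation*}
|h|_{s+a-(k-1)\e} = |\hat A h|_{s+a-(k-1)\e} \leq C\,|\rho|^*_{b+s+a-(k-1)\e}|\rho|^{*\tau}_{2+\e''}|h|_{\e''} + |h|_{s+a-k\e}.
\end{equation*}
Choosing $N$ so that $s+a-N\e \in (0,\e]$ and telescoping, the intermediate norms of $\rho$ are all bounded by $|\rho|^*_{b+s+a}$, while the tail $|h|_{s+a-N\e}$ is absorbed into $|h|_{\e''}$ by one final application of the gain estimate (with $s' = \e''$ and a suitable $\e' \in (0,\e]$). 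This produces \re{hse}. Because $\dim \ker A < \infty$, all H\"older--Zygmund norms on $\ker A$ are equivalent, so applying \re{hse} to $h = \Pi_{\ker A}f$ and using the boundedness of $\Pi_{\ker A}$ on $\Lambda^{\e''}_q$ yields the claimed sharper bound with no $|f|_s$ term, proving $e_b(\Pi_{\ker A}) = \infty$.

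For (b), the identity $(I - \hat A)(I - \hat B) = I$ unpacks to $\hat B = -\hat A A^{-1}$. Writing $g := A^{-1}f$, the relation $g = f + \hat A g$ supports an analogous bootstrap: starting from the seed $|g|_{\e''} \leq |A^{-1}|_{\e''}|f|_{\e''}$ and iterating the bound $|\hat A g|_{s'} \leq C(|\rho|^*_{b+s'}|\rho|^{*\tau}_{2+\e''}|g|_{\e''} + |g|_{s'-\e})$ down through $s' = s, s-\e, s-2\e, \ldots$, I obtain
\begin{equation*}
|g|_s \leq C\bigl(|f|_s + |\rho|^*_{b+s}|\rho|^{*\tau}_{2+\e''}|A^{-1}|_{\e''}|f|_{\e''}\bigr).
\end{equation*}
A final application of the gain estimate with $s' = s$ and $\e' = \e$ to $\hat B f = -\hat A g$ then gives the stated inequality for $|\hat B f|_{s+\e}$. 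The main technical obstacle is ensuring that each iteration of the gain costs only $|\rho|^*_{b+s+\e}$ on the final right-hand side rather than a higher norm accumulated across the $N$ bootstrap steps, and that the $|\rho|^{*\tau}_{2+\e''}$ factor stays outside the iteration; this is precisely what the form of \rd{e-gain} together with the convexity inequalities of \rp{zprod} is designed to accommodate.
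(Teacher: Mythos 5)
Your proposal is correct, and it rests on the same core ingredients as the paper's proof: compactness of $\hat A$ (hence $\dim\ker A<\infty$ and, in (b), bounded invertibility of $A$ on each admissible $\Lambda^s_q$), the identity $h=\hat A h$ on $\ker A$, and the identity $\hat B=-\hat A+\hat A\hat B=-\hat A A^{-1}$, each combined with the gain estimate \re{sube2}. The only real difference is how the lossy term $|h|_s$ (resp. $|\hat B g|_s$, or in your formulation $|A^{-1}f|_{s'-\e}$) is removed: the paper applies the gain estimate once and then uses the interpolation inequality $|f|_c\leq C\delta^{-c}|f|_{a}+C\delta^{b-c}|f|_{b}$ to absorb a small multiple $\delta^{\e}$ of the top norm into the left-hand side, which requires the a priori finiteness of that top norm (justified by \rd{e-gain}(a), resp. the open mapping theorem); you instead run a finite bootstrap in steps of $\e$ down to the level $\e''$, which avoids the a priori-finiteness/absorption step at the cost of $O((s+a)/\e)$ applications of \re{sube2}, and your worry about accumulating boundary norms is indeed harmless since $|\rho|^*_{b+s+a-(k-1)\e}\leq|\rho|^*_{b+s+a}$ by monotonicity, with the strongest boundary-regularity requirement occurring at the top step, exactly as in the statement. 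Both routes yield the stated estimates, including the reduction of the projection bound in (a) to \re{hse} via equivalence of norms on the finite-dimensional space $\ker A$, and your final step $|\hat Bf|_{s+\e}=|\hat A A^{-1}f|_{s+\e}$ reproduces the factor $(1+|A^{-1}|_{\e''})|f|_{\e''}$ just as the paper's absorption argument does.
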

\begin{proof} $(a)$ Since $\e_b(\hat A)\geq \e>0$, then $\hat A$ is compact on $ \Lambda_q^{\e}$. Thus $\dim ker A <\infty$.   For $0<a<c<b$,
we have $$
|f|_c\leq C_{b}\del^{-c} |f|_{a}+C_b\del^{b-c}|f|_b, \quad \forall \del\in(0,1).
$$
Hence, the identity $h=-\hat A(h)$ implies
\aln{}
|h|_{s+\e}&\leq C_{s,\e''}  (|\rho|^*_{s+b+\e}|\rho|_{2+\e''}^{*\tau}|h|_{\e''}+|h|_s)\\
&\leq C_{s,\e''}  |\rho|^*_{s+b+\e}|\rho|_{2+\e'}^{*\tau}|h|_{\e''}+C^2_{s,\e''}\del^{-s}
|h|_{\e''}+C^2_{s,\e''}\del^\e |h|_{s+\e}.
\end{align*}
When $\rho\in\Lambda^{b+s+\e}$, we have $|h|_{s+\e}<\infty$. 
This gives us (a) by taking $C^2_{s,\e''}\del^\e\leq 1/2$.

$(b)$  Assume that 
 $g\in\Lambda^{s+\e}$ and $\rho\in\Lambda^{b+s+\e}$. By the open mapping theorem on bounded operators, we have $|Bg|_{s+\e}<\infty$. We have $\hat Bg=-\hat Ag+\hat A(\hat B(g))$ and $|A^{-1}|_{\e''}<\infty$ for $0<\e''<\e$.
 Thus \aln{}
&|\hat B g|_{s+\e}\leq |\hat Ag|_{s+\e}+|\hat A(\hat B g)|_{s+\e}\\
&\leq C_{s,b,\e'',\e}(\|\rho\|_2)(|\rho|_{2+\e''}^{*\tau}|\rho|^*_{s+b+\e}|\hat B g|_{\e''}+|\hat B g|_{s}+ |\hat Ag|_{s+\e}) 
\\
&\leq C'_{s,b,\e'',\e}(\|\rho\|_2)(|\rho|_{2+\e''}^{*\tau}|\rho|^*_{s+b+\e}|\hat B g|_{\e''}+ C_{s,\e}(\del^{-s}|\hat B g|_{\e''}+\del^{\e}|\hat B g|_{s+\e})+|\hat Ag|_{s+\e})\\
&\leq  C''(|\rho|_{2+\e''}^{*\tau}|\rho|^*_{s+b+\e}(1+| B|_{\e''})|g|_{\e''}+\del^{-s}(1+| B|_{\e'})|g|_{\e''}+\del^{\e}|\hat B g|_{s+\e})+|\hat Ag|_{s+\e}),
\end{align*}
where $C''=C''_{s,b,\e'',\e}(\|\rho\|_2)$. Since $|\hat B g|_{s+\e}<\infty$, we get $(b)$ by taking $\del$ such that 
\eq{}
C''_{s,b,\e'',\e}(\|\rho\|_2) \del^\e\leq 1/2.
\qedhere
\eeq
\end{proof}

\setcounter{thm}{0}\setcounter{equation}{0}
\section{Existence of projection operators and their norms} \label{sect3}
 The main purpose of this section is to   introduce several finite dimensional subspaces of $\Lambda^s_q$ and study the norms of projections onto these subspaces. 

 To simplify notation, we will not use subscript $q$ in the operators $A^{(0)}$, $A^{(j)},B^{(j)}$ etc. below. Nevertheless they all act on certain  subspaces of the space of $(0,q)$ forms. 
 We will also use $\tilde\Lambda^+_q,\cL R^s,\cL K,\cL S,\cL P$ below for certain subspaces of $ V $-valued $(0,q)$ forms on $\ov M$. Since most results in this section are based on  functional analysis and apply to general differential complex, we will use $d$ instead of $\db$ for the rest of the paper.
 Of course, we will use properties: $(a)$  $d^2=0$;   $(b)$ $d\colon\Lambda^{r}_q\to\Lambda^{r-1}_{q+1}$ is bounded  for $r>1$.
 We will also need $(c)$ if $f\in\tilde\Lambda^\e_q(M)$, then there exists $f_j\in C_q^1(\ov M)$ such that
$f_j-f,d(f-f_j)$ converges to $0$ in the $\Lambda^{\e''}(M)$ norms for $0<\e''<\e$. Note that the simultaneous approximation is satisfied by the $\db$ operator; see also \rrem{density}.

From now on, we assume that
\ga{}\label{ss'}
M\in \Lambda^{r_0+5/2}, \\
\label{ss'-M}
0<s<r_0+1/2\leq\infty.
\end{gather}

\subsection{A decomposition for the total space $\Lambda_{(0,q)}^s$}

We start with the following.
\begin{lemma}[\cite{MR2077422}*{Lemma 4.2}]\label{lemm:An0}
Let $\cL B$ be a Banach space and let $\hat A\colon \cL B\to\cL B$ be linear and compact. Let $A=I-\hat A$. Then there exists  $n_0>0$ such that
\eq{An0} \ker A^{n_0+1}=\ker A^{n_0}.
\eeq Further, $\cL K:=\ker A^{n_0}$ and ${{\cL R}}:=A^{n_0}(\cL B)$ satisfy the following.
\bpp
 \item ${{\cL R}}$ is closed in $\cL B$, ${{\cL R}} \cap \Kc = \{0\}$, and hence
$\cL B={{\cL R}}\oplus \cL K$.
\item $\ker A\subset\cL K$, and $A$ is an isomorphism of ${{\cL R}}$, i.e. on ${{\cL R}}$,  $A$ is linear, injective and onto.
\epp
\ele
Note that $\cL K,{{\cL R}}$ are independent of $n_0$. We always take $n_0=n_0(A)$ to be the smallest positive integer satisfying \re{An0}.

We now apply \rl{lemm:An0} to our operator $A_q^{(0)}$.   
Let $n_0=n_0(A^{(0)})$ be the smallest positive integer as in \rl{lemm:An0} for $A^{(0)}\colon\Lambda^s_q\to\Lambda_q^s$ with $s>0$.  Set $\tilde A^{(0)}=(A^{(0)})^{n_0}.$
By \rl{e1e2}, we have 
$$
e_2(\tilde A^{(0)}-I
)\geq e_2( A^{(0)}-I)\geq1/2. 
$$
 By \rl{lemm:An0}, we have the following
\le{lemma:KSH} Let $ \cL K:=\ker \tilde A^{(0)}$ and $ \cL R ^ s :=\tilde A^{(0)}(\Lambda_q^s)$. Assume that \rea{ss'}-\rea{ss'-M} hold. Then
\begin{gather*}
\Lambda_q^s=\cL R  ^ s \oplus \cL K ,
\intertext{with} \dim \cL K <\infty,\quad \cL K \subset\Lambda_q^{r_0+1/2}, \quad \ov{\cL R  ^ s }=\cL R  ^ s,
\\
A^{(0)}\cL K \subset\cL K , \quad A^{(0)}(\cL R  ^s)=\cL R  ^ s,\quad \ker A^{(0)}\cap \cL R  ^s=\{0\}.
\end{gather*}
Furthermore,  the following hold.
\bpp
\item The $n_0(A^{(0)}), \cL K $ are independent of $s$ and depend on $M$, $r_0$ and $\tilde\Lambda_q^+$.
\item With $\dim \cL K <\infty$, there exists a decomposition
$$\cL K =\cL P \oplus\cL S \oplus\cL H
$$
where
$\cL P \cap \ker d=\{0\}$ and
\eq{defSH}
\cL S :=\cL K \cap d\Lambda^{r_0+1/2}_{q-1},\quad \Kc \cap\ker d=\cL S \oplus\cL H.
\eeq
\item We have
\ga{}
\label{Nscz++}
A^{(0)}=  dT_q^{(0)}\quad  \text{on $\cL S \oplus\cL H$},\qquad
A^{(0)}(\cL S \oplus\cL H) 
\subset\cL S, 
\\
A^{(0)}(\cL R  ^ s \cap\cL Z_q^ s )= \cL R  ^ s \cap\cL Z_q^ s.   
\label{Nscz+}\end{gather}
\epp
\ele
\begin{proof} Since $\hat A^{(0)}: \La_q^r \to \La_q^{r+\yh}$ for any $0 < r \leq r_0$, it is easy to see that $\Kc_q = \ker
\bl (I-\hat A^{(0)})^{n_0} \br$ lies in the space $\La_q^{r_0+\yh}$. Then $\cL K$ is independent of $s$.
Since $\cL S \oplus\cL H =\cL K \cap\ker d$, we have $\cL P \cap \ker d=\{0\}$.

By $A^{(0)}=dT_q^{(0)}+T_{q+1}^{(0)}d$, it is easy to see that
   $A^{(0)}\cL S \subset\cL S $
 and $A^{(0)}\cL H\subset\cL S $. This verifies \re{Nscz++}.

Since $\cL R ^s$ and $ \cL Z_q^s$ are closed in $\Lambda^s_q$, then $\cL R ^s\cap \cL Z_q^s$ is closed in $\Lambda^s_q$. Then $A^{(0)}$ is injective and Fredholm on $\cL R ^s\cap \cL Z_q^s$. Hence we get \re{Nscz+}.
\end{proof}

By definition, $\cL S$ is the space of elements $f$ in $\Kc$ that admit solutions $u$ for $du=f$ with regularity $u\in\Lambda_{q-1}^{r_0+1/2}$, while $\cL H$ is a complement of $\cL S$ in $\cL K$.

\subsection{Inverting $A^{(1)}$ on   $\cL R  ^ s+\cL S $}

\begin{defn}
Fix $s^-_i\in\Lambda_{(0,q-1)}^{r_0+1/2},s_i:=ds^-_i\in\Lambda_q^{r_0}$, and $p_j,h_\ell\in\Lambda_q^{r_0+1/2}$ satisfying
$$ 
\cL H =\oplus_{\ell=1}^{k_0}\cc h_\ell,\quad
\cL S =\oplus_{i=1}^{k_1} \cc s_i,  \quad \cL P _q=\oplus_{j=0}^{k_2} \cc p_j.
$$ 
The decomposition $\Lambda^s_q=\cL R ^s\oplus \cL P\oplus \cL S\oplus\cL H$ induces bounded projections
$
\Pi^s_{\cL P},  \Pi^s_{\cL S}, \Pi^s_{\cL H}, \Pi^s_{\cL R ^s}$ from $\La_q^s$ onto $\cL P, \cL S,\cL H,\cL R ^s$, respectively.
\end{defn}

 It is helpful to observe that $\cL P,\cL S,\cL H$ are independent of $s$, and hence by the uniqueness of the decomposition   on $\cL R^s$, we have $\Lambda^{s}_q\cap \cL R ^{s'}=\cL R ^{s}$ for $0<s'<s<r_0+1/2$. Also
$$
\Pi^s_{\cL P}=\Pi^{s'}_{\cL P},\quad  \Pi^s_{\cL S}=\Pi^{s'}_{\cL S},\quad \Pi^s_{\cL H}=\Pi^{s'}_{\cL H},\quad\Pi^s_{\cL R^s}=\Pi^{s'}_{\cL R^{s'}},\quad  0<s'<s<r_0+1/2.
$$
Consequently, from now on we drop $s$ in $\Pi^s_{\cL P}, \Pi^s_{\cL S},  \Pi^s_{\cL H}$, and write $\Pi_{\cL R}:=\Pi^s_{\cL R^s}$.

We will also use
\gan{}
\Pi_{\cL S}f=\sum\all_i(f)s_i,\quad\Pi^{-}_{\cL S}f=\sum\all_i(f)s_i^-,\\
H_qf:=\Pi_{\cL H}f=\sum\gamma_j(f)h_j.
\end{gather*}
Thus $\Pi_{\cL S}=d\Pi_{\cL S}^-$. We   define operators $T_q^{(1)}$ and  $A^{(1)}$ and rename $T_{q+1}^{(0)}$ as follows.
\begin{defn} On $\Lambda_{q+1}^+$,
set $T_{q+1}^{(1)}=T_{q+1}^{(0)}$. On $\Lambda_{q}^+$, set
$$ 
T_q^{(1)}=T^{(0)}_q(I-\Pi_{\cL S}-\Pi_{\cL H} )+ \Pi^-_{\cL S}.
$$ 
Then for $j=q,q+1$,  $T_j^{(1)}$ still gains $1/2$ derivative on $\Lambda_j^s$.   On $\Lambda^+_q$, set
$$
  A^{(1)}:=A^{(0)}-A^{(0)}(\Pi_{\cL S}+\Pi_{\cL H})+\Pi_{\cL S}.
$$
\end{defn}

Immediately, we have
 \le{} Assume that \rea{ss'}-\rea{ss'-M} hold. Then
\ga
 A^{(1)}=A^{(0)} \quad \text{on}\ \cL R ^ s \oplus\cL P,\quad A^{(1)}|_{ \cL H}=0,\label{A1uAu}
 \quad
 A^{(1)}|_{\cL S}=I.
\end{gather}
\ele

\le{}
On $\tilde \Lambda_q^+$, $A^{(1)}=dT_q^{(1)}+T^{(1)}_{q+1}d$.  Assume that \rea{ss'}-\rea{ss'-M} hold. Then $A^{(1)}-I$ is compact on $\Lambda_q^s$.
 \ele
 \begin{proof}
 By \re{Nscz++},  $dT_q^{(0)}=A^{(0)}$ on $\cL S\oplus\cL H$.
 Then
    the first identity can be checked easily. Write
    \[
      I - A^{(1)} = \hat A^{(0)} + dT_q^{(0)}(\Pi_{\cL S}+\Pi_{\cL H}) - \Pi_{\cL S}.
    \]
    The compactness of $\hat A^{(1)}$ follows from that of $\hat A^{(0)}$.
 \end{proof}

 \le{}Assume that \rea{ss'}-\rea{ss'-M} hold. There exists a bounded isomorphism
$$
B^{(1)}\colon \cL R ^ s \oplus\cL S \to \cL R ^ s \oplus\cL S
$$
such that
\al{}
 &A^{(1)}B^{(1)}=B^{(1)} A^{(1)}=I \quad \text{on $\cL R ^ s \oplus\cL S $};\quad B^{(1)}=I\quad\text{on $\cL S $};\\
\label{B1est}
& |(B^{(1)}-I) g|_{s+\delta}\leq  C_{s,\e,\e''}(\|\rho\|_{2})
 \Bigl(|g|_{s}+\Bigr.\\
 &\hspace{6ex}\Bigl.
 |\rho|^{*\tau}_{2+\e''} |\rho|^*_{s+2+\e}\bigl (1+\bigl|\bigl(A^{(0)}|_{\cL R^{\e''}}\bigr)^{-1}\bigr|_{\e''}\bigr)
  |g|_{\e''}
\Bigr),
  \quad \forall s>0,\   g\in
{\cL R} ^ s \oplus\cL S.
\nonumber
\end{align}
Here $C_{s,\e,\e''}(|\rho|_{2})$ depends only on $\|\rho\|_{2}$.
\ele
\begin{proof} 
Since $A^{(1)}=A^{(0)}$ on $\cL R^s$, then $A^{(1)}$ is  bounded and injective on  $\cL R^s$. Then $A^{(1)}$  is the identity on $\cL S$. Therefore, $A^{(1)}$ has a bounded inverse $B^{(1)}$ on $\cL R^s\oplus \cL S$. 

The estimate for \re{B1est} is  verbatim from the proof of \rp{hatB-est} (b), using $e_2(A^{(0)})\geq1/2$. We leave the details to the reader.
\end{proof}
 
\subsection{Homotopy operators on the subspace $\cL R\oplus \cL S$}
By $A^{(1)}=dT_q^{(1)}+T_{q+1}^{(1)}d$, we get easily
$$ 
A^{(1)}\cL Z_q^ s \subset \cL Z_q^ s, \quad s>0.
$$ 
\le{}
Assume that \rea{ss'}-\rea{ss'-M} hold. Then
\ga\label{ZsN}
\cL Z_q^ s =(\cL R ^ s \cap \ker d)\oplus\cL S \oplus \cL H,\quad \cL Z_q^ s \ominus \cL H:=(\cL R ^ s \cap \ker d) \oplus\cL S,\\
 \label{A1Z} A^{(1)}(Z_q^ s \ominus \cL H)=\cL Z_q^ s \ominus \cL H,\quad  A^{(1)}B^{(1)}=B^{(1)} A^{(1)}=I_{\cL Z_q^s\ominus \cL H}. 
\end{gather}
\ele
\begin{proof}By definition, $\cL S\oplus\cL H\subset\cL Z_q^s$. Thus $\cL Z^s_{(0,r)}=(\cL R^s\oplus\cL P)\cap Z^{s}_{(0,r)}\oplus\cL H\oplus\cL S$.
Let  $f\in\cL  Z^ s_q\cap (\cL R^s\oplus \cL P)$. Decompose
$$
f=f_{\cL R}+ f_{\cL P}, \quad f_{\cL R}\in\cL R^s, \quad f_{\cL P}\in\cL P.
$$
We want to show $f_{\cL P}=0$.

Recall that $\tilde  A^{(0)}= (A^{(0)})^{n_0}$ 
and $\tilde A^{(0)}|_{\cL K}=0$. Then $A^{(0)}\cL Z^s_q\subset\cL Z^s_q$ implies that 
$$
\tilde  A^{(0)}f_{\cL R}=\tilde  A^{(0)}f\in\cL Z_q^s.
$$
Then $ A^{(0)}(\cL R ^s)\subset\cL R ^s$ and \re{A1uAu} imply
 ${\tilde A^{(0)}}f_{\cL R}\in \cL R ^ s \cap\cL Z_q^s.
$
Hence
$f_{\cL R}\in\cL R ^s\cap\cL Z_q^s$ by \re{Nscz+}. Then
$
f_{\cL P}=f-f_{\cL R} \in \cL Z_q^s$. On the other hand, $\cL P\cap\cL Z_q^s=\{0\}$. Thus $f_{\cL P}=0$.
This shows \re{ZsN}.

Since $A^{(1)}$ is an isomorphism on $\cL R^+\oplus\cL S$ with inverse $B^{(1)}$, we get \re{A1Z} from \re{ZsN}.
\end{proof}

\le{max-sol-lemm}Assume that \rea{ss'}-\rea{ss'-M} hold. Assume further that
\eq{max-sol}
\Lambda^{r_0}_{q}\cap d\Lambda_{q-1}^+=\Lambda^{r_0}_{q}\cap d\Lambda^{r_0+1/2}_{q-1}.
\eeq Then
\ga
\label{dTq}
 dT_q^{(i)}(\tilde{\cL R}^\e\oplus\cL S)\subset \cL Z_q^\e\ominus\cL H, 
  \quad i=0,1.
\end{gather}
\ele
\begin{proof}Take $f\in\tilde\Lambda^+_q\cap(\cL R\oplus S)$. Then $dT_q^{(i)}f=A^{(i)}f-T_{q+1}^{(i)}df\in\Lambda^+$;
 and $d^2=0$ implies that $\tilde f:=dT_q^{(i)}f\in \cL Z_q^+$.
By \re{ZsN}, we have the decomposition
$$
\tilde f =\tilde f_{\cL R}+\tilde f_{\cL S}+\tilde f_{\cL H}\in\cL R\oplus\cL S\oplus\cL H.
$$
We want to show that $\tilde f_{\cL H}=0$. By \re{A1Z},  $A^{(1)}$ is an automorphism on $\cL R\cap\ker d\oplus\cL S$ with inverse $B^{(1)}$.
Then $v:=B^{(1)}\tilde f_{\cL R}\in \cL Z_{(0,r)}^+$ and hence $\tilde f_{\cL R}=A^{(1)}v=dT_q^{(1)}v$. We can write $\tilde f_{\cL S}=dw $ with $w\in\Lambda^{r_0+1/2}$.
Therefore,
$$
d(T_q^{(i)}f-T_q^{(1)}v-w)=\tilde f_{\cL H}.
$$
Since $T_q^{(i)}f-T_q^{(1)}v-w\in\Lambda_{(0,q-1)}^+$, then \re{max-sol} implies that $\tilde f_{\cL H}=du$ for some $u\in\Lambda^{r_0+1/2}_{(0,q-1)}$, i.e. $\tilde f_{\cL H}\in\cL S$.  On the other hand,
$\tilde f_{\cL H}\in \cL H$. Therefore, $\tilde f_{\cL H}=0$.
\end{proof}
\begin{rem}
It is proved in \ci{MR4866351} that \re{max-sol} holds for $a_q$ domains $M$ of $\Lambda^{r_0+5/2}$ boundary in a complex manifold; in fact
a stronger result holds: $\Lambda^{r_0}_{q-1}\cap \db L^2_{loc}=\Lambda^{r_0}_{q-1}\cap \db\Lambda^{r_0+1/2}_{q-1}$. However, the result and its proof in \ci{MR4866351} require that $r_0>1$. The same proof is valid for $0<r_0\leq1$ when we use the homotopy formula for $\dbar$-closed  $(0,q)$ forms of class $\Lambda^{r_0}$ for $r_0>0$.
 For the general situation, we need \re{max-sol} as an assumption. Polyakov~\ci{MR2077422} has similar assumptions.
 \end{rem}

\rl{max-sol-lemm} allows us to define the following $T_q^{(2)}, T_{q+1}^{(2)}$.
\le{}  Assume that \rea{ss'}-\rea{ss'-M} and \rea{max-sol}  hold.
 Then
\gan 
T_q^{(2)}f:= T_q^{(1)}(B^{(1)})^2d T_q^{(1)}f,\\ T_{q+1}^{(2)}df:=B^{(1)}T^{(1)}_{q+1}df
=B^{(1)}T^{(0)}_{q+1}df
\end{gather*}
are well-defined for   $ f\in \tilde{\cL R}^+\oplus \cL S$.
\ele
\begin{proof} \rl{max-sol-lemm} says that $T_{q}^{(2)}$ is well-defined.
  Thus $B^{(1)}T_{q+1}^{(1)}df=B^{(1)}A^{(1)}f-B^{(1)}(dT^{(1)}_{q}f)$ is also well-defined for  $ f\in \tilde{\cL R}^+\oplus \cL S$.
\end{proof}
\begin{lemma} Assume that \rea{ss'}-\rea{ss'-M} and \rea{max-sol}  hold.
We have \eq{BAf}
f=dT_q^{(2)}f+T_{q+1}^{(2)}df
\eeq
for  $f\in \tilde {\cL R} ^+ \oplus\cL S $.
Further, for $0<\e\leq 1/2$ and $f\in\widetilde{ \cL R} ^\e \oplus\cL S$ we have
\ga{}
|T_{q+1}^{(2)}df|_{\e}\leq  C_\e|df|_{\e}, 
\label{D0g}\\
\label{no-gain}
|dT_q^{(2)}f|_\e\leq C'_{\e}\|(f, df)\|_{\e}, 
\\
\label{dTq2} dT_q^{(2)}(\widetilde{\cL R}^\e\oplus\cL S)\subset  \cL Z^\e_q \ominus\cL H, 
\end{gather}
where $C_\e,C_\e'$ depend on $|\rho|_{2+\e}$.
\end{lemma}
\begin{proof}
Assume that  $f\in\widetilde{\cL R}^\e \oplus S$. We know that $dT_q^{(1)}f=A^{(1)}f-T_{q+1}^{(1)}df\in\Lambda^\e_q$.  Then $A^{(1)}f$ and $dT_q^{(1)}f$ are in $\cL R ^\e \oplus S $. By the decomposition
\eq{A1f}
A^{(1)}f=d T_q^{(1)}f+T_{q+1}^{(1)}df\in\cL R ^ \e \oplus\cL S,
\eeq
$T_{q+1}^{(1)}df$ is also in $\cL R ^\e \oplus \cL S$.
We now follow \ci{MR2077422} for our situation.
Applying $B^{(1)}$ to both sides of \re{A1f},
we obtain
\eq{BdT}
f=B^{(1)} T_{q+1}^{(1)}df+ B^{(1)}d T^{(1)}_qf.
\eeq
We need to rewrite the last term. Since $B^{(1)} (\cL Z_q^\e\ominus \cL H)\subset\cL Z_q^\e\ominus \cL H$ and $dT_q^{(1)}f\in \cL Z_q^\e\ominus \cL H$, then
\eq{dBdRh}
 d(B^{(1)})^2d T_q^{(1)}f=0.
\eeq
Then
\al{}\label{B1dR}
B^{(1)} d T_q^{(1)}f&=  A^{(1)}B^{(1)}[B^{(1)}d T_q^{(1)}f] =(d T_q^{(1)}+T_{q+1}^{(1)}d)B^{(1)}[B^{(1)}d T_q^{(1)}f]\\
&=d T_q^{(1)}B^{(1)}[B^{(1)}d T_q^{(1)}f] =d T_q^{(2)}f.
\nonumber
\end{align}
We have verified \re{BAf} via \re{BdT}.

The estimate \re{D0g} follows directly from  well-definedness of $T_{q+1}^{(2)}df=B^{(1)}T_{q+1}^{(0)}df$ and estimates of $T_{q+1}^{(0)}$ and $B^{(1)}$ given by \re{T0L0_est} and \re{B1est}. The estimate \re{no-gain} follows from estimates \re{D0g}
and the identity $T_q^{(2)}f=T_q^{(1)}(B^{(1)})^2(A^{(1)}f-T_{q+1}^{(1)}df)$.
Finally, \re{dTq2} follows from  \re{dTq}.
\end{proof}

\subsection{The projection $\Pi_{d\cL P}$}

We are seeking a decomposition of the form $f=dP_qf+P_{q+1}df+H_qf$ for $f\in \tilde\Lambda^s$.  
Although $\tilde \Lambda^s_q$ is still not closed in $\Lambda^s_q$, we have for $s>0$
$$
\tilde\Lambda^s_q=\widetilde{\cL R} ^s \oplus\cL K, \quad \widetilde{\cL R} ^s :=\cL R^s \cap \tilde\Lambda_q^s.
$$
We now show the following auxiliary result to apply Hahn-Banach theorem for $(0,q+1)$ forms, which is an improvement of  Polyakov~\cite[Lemma 4.4]{MR2077422}.

\begin{prop}\label{defbeta_j}Assume that  \rea{ss'}-\rea{ss'-M} and \rea{max-sol}  hold.
Assume further that
\eq{r0>}
d\cL P\subset\Lambda^{\e},
\eeq
where $\e>0$ is fixed.  Then
$$ 
(d\cL P )\cap\ov{d(\widetilde{\cL R}  ^{\e} \oplus\cL S\oplus\cL H)}=(d\cL P )\cap\ov{d\widetilde{\cL R}  ^{\e} }=0,
$$ 
 where $\ov{d\widetilde{\cL R} ^{\e}}$ is the closure in $\Lambda^\e_{(0,q+1)}$ in $\Lambda^{\e}$ norm.
 There exist  bounded linear functionals $\beta_j^{(\e)}$ on $\Lambda_{(0,q+1)}^{\e}$ such that
\eq{beta_j}
\beta_j^{(\e)}(dp_{j'})=\delta_{jj'}, \quad \beta_j^{(\e)}(d\widetilde{\cL R}  ^\e)=0.
\eeq
\end{prop}
\begin{rem}\label{condP} 
When $ b M\in\Lambda^{3+\e}$, we have $\cL P\subset\Lambda^{1+\e}$. 
\end{rem}
\begin{proof}
Suppose that $\psi_i\in\cL P $ and there is a sequence $\phi_i\in\widetilde{ \mathcal R}^\e$ such that
$$
 \var:=\lim_{i\to\infty} d\psi_i=\lim_{i\to\infty} d\phi_i
$$
where each convergence is in $\Lambda^{\e}$ norm and $\var\in\Lambda^\e_{q+1}$. We need to show that $\var=0$.

 Recall that $\cL P$ has a basis $\{p_1,\dots, p_{k_2}\}$. We have $\psi_j=\sum c_{j,\ell}p_\ell$. Since $\|\psi_j\|_\e$ is bounded and $dp_\ell\in\Lambda^\e$, then $c_j=(c_{j,1},\dots, c_{j,k_2})\in\cc^{k_2}$ is also bounded. By taking a subsequence we may assume that $c_j$ converges. Hence  $\psi_j$ converges to $ \psi\in\cL P$ in the space $\Lambda^{r_0+1/2}$  and $d\psi=\var$.

By \re{BAf}, we have for $f\in\tilde{\cL R}^+\oplus\cL S$
$$ 
f=dT^{(2)}_{q}f+T_{q+1}^{(2)}df.
$$ 
Since $T^{(2)}_{q+1}$ does not lose derivative by \re{D0g}, the sequence $T^{(2)}_{q+1}d\phi_i$   still converges to $\gamma:=T^{(2)}_{q+1}\var\in\Lambda^\e$ in $\Lambda^{\e}$ norm.
On the other hand,  $\phi_i\in\tilde{\cL R}^\e\subset\cL R^\e$,  $dT_q^{(2)}\phi_i\in\cL R^\e\oplus\cL S$ by \re{dTq2}, and hence   $T^{(2)}_{q+1}d\phi_i=\phi_i-dT_q^{(2)}\phi_i$ are in $\cL R^\e\oplus\cL S$. The latter is closed. This shows that $\gamma\in \cL R^\e\oplus\cL S$.

Using  $\phi_i=dT^{(2)}_q\phi_i+T_{q+1}^{(2)}d\phi_i$ again, in the sense of distribution  we obtain $d\phi_i=dT_{q+1}^{(2)}d\phi_i$ and
$$d\psi =\lim d\psi_i=\var=\lim d\phi_i=\lim dT^{(2)}_{q+1}d\phi_i=d\gamma.$$
 Then $d(\gamma-\psi)=0$, in the sense of distribution.  Now $\cL P\cap\ker d=\{0\}$ implies that $\gamma-\psi\in\cL Z_q^{\e}\subset{\cL R} ^{\e}\oplus\cL S$, and hence $\psi\in\cL R ^{\e}\oplus\cL S$. However,  $\psi\in\cL P $. Hence $\psi=0$ and $\var=d\psi=0$.

Using \re{r0>} again,   we can define the linear functional $\beta_j$ and verify the rest of the proposition.\end{proof}

\setcounter{thm}{0}\setcounter{equation}{0}
\section{Analogue of Polyakov's homotopy operators on $\tilde \Lambda^+_q$}
\label{sect:hf}

In this section we will achieve the first version of  Hodge-type decompositions for $a_q$ domains. This version of decomposition is analogous to the  decomposition for compact CR manifold in Polyakov~\cites{MR2077422}. We will achieve this by modifying the operators $T_q^{(0)}, T_{q+1}^{(0)}$ and hence $A^{(0)}$.  We want the range $\mathcal R^s $  of  $dP_q+P_{q+1}d$ to be exact $\Lambda^s_q$ under $H^{(0,q)}_{\db}(M,V)=0$ or $\Lambda^s_q$ module a finite dimensional space $\cL K $ that consists of $d$-closed forms.

Recall that there are bounded functional $\all_j^{(\e)}$ on $\Lambda_q^{\e}$ such that $\Pi_{\cL S}f=\sum\all_j^{(\e)}(f)s_i$.
By Hahn-Banach theorem, we can also find bounded functionals $\gamma_\ell^{(\e)}$ on $\Lambda_q^{\e}$ such that $\Pi_{\cL H}(f)=\sum\gamma_\ell^{(\e)}(f)h_\ell$.

Let $\beta_j^{(\e)}$ be as in \rp{defbeta_j}. We now define  
$$ 
\Pi^-_{\cL P}(g)=\sum_{j=1}^{k_2}\beta_j^{(\e)}(g)p_j,\quad \Pi_{d\cL P}:=d\Pi^-_{\cL P}.
$$ 
We remark that     $ \Pi_{d\cL P}$ is a projection from $\Lambda_{q+1}^\e$ onto $d\cL P$ under condition \re{r0>}.
Although the $\beta_j^{(\e)}$ depends on $\e$, \re{beta_j} ensures that $\Pi_{d\L P}$, when restricted on $d\tilde\Lambda^{\e}_q$, does not depend on $\e$. 
By \re{hse}, we have for $s>0$
\aln{} 
|\Pi_{d\cL P}g|_{s}&\leq C|g|_{\e''}|(dp_1,\dots,d p_{k_3})|_{s}\\
&\leq
 C_{s,\e''}(|\rho\|_2)|\rho|_{s+3}|(p_1,\dots, p_{k_3})|_{\e''}|g|_{\e''}.
 \nonumber
\end{align*}
In particular, we have
\eq{e52dP}
e_2(\Pi_{\cL P}^-)=\infty, \quad e_{3}(\Pi_{d\cL P})=\infty.
\eeq

We will now use the four projections $\Pi_{\cL P}, \Pi_{\cL H}, \Pi_{\cL S}, \Pi_{d\cL P}$ to define homotopy operators on $\tilde\Lambda_q$ and $\Lambda_{q+1}$. This will give us a homotopy formula on $\tilde\Lambda_q$.

Define
\al{}\label{T3_q}
T^{(3)}_qf&:= T_q^{(0)}f+\tilde T_q^{(3)}f,\quad \forall
f\in\tilde\Lambda^+_q\\
\label{tT3_q}\tilde T_q^{(3)}f&:=T_q^{(0)} (-\Pi_{\cL P}f-\Pi_{\cL S}f-\Pi_{\cL H}f)+\Pi^-_{\cL S}f;\\
T^{(3)}_{q+1}(g)&:=T^{(0)}_{q+1}(g-\Pi_{d\cL P}g)+\Pi_{\cL P}^-g,\qquad \qquad g\in\Lambda_{q+1}^+;
\label{T3_q+1}
\\ \label{A1_def}
A^{(3)}f&:=dT^{(3)}_qf +T^{(3)}_{q+1}df,\quad F(f):=A^{(3)}f+\Pi_{\cL H}f,\quad \forall
f\in\tilde\Lambda^+_q.
\end{align}
Although the definition of $F$ involves the projection operator $\Pi_{d\cL P}$, the operator $F-I$ has an {\it unexpected} gain of derivative by avoiding the second estimate in \re{e52dP}.  
\begin{prop}\label{value-F} Assume that \rea{r0>} hold. Then
\bpp\item
$F=A^{(0)}$ on $\widetilde{\cL R} ^s$ and $F=A^{(3)}=A^{(1)}$ on $\widetilde{\cL R} ^+\oplus S$. Consequently, $A^{(1)}$, defined on $\cL R^+\oplus S$, is an extension of $F|_{\widetilde{\cL R} ^+}$.
\item $F=A^{(3)}=I$ on $\cL P\oplus\cL S$, and $F=I$ on $\cL H$.
\item  The $F$, defined on $\widetilde{\cL R} ^\e$ extends to an isomorphism $F$ from $\Lambda^\e$ defined by
   $$ 
    F= A^{(1)} \quad \text{on $\cL R^\e$}, \quad F=I \quad \text{on $\cL P\oplus\cL H\oplus \cL S$}.
 $$ 
 In particular, $F$ is an isomorphism on $\cL Z^{s}_q$ and we actually have
  \eq{e2F}
  e_2(I-F)\geq 1/2.
  \eeq
 \item We have 
 \eq{R4}
f=dP_q^{(0)}f+P_{q+1}^{(0)}df+H_qf
\eeq
for \eq{}\label{P0P0}
P_q^{(0)}:= T_q^{(3)}(F^{-1})^2d T_q^{(3)}, \quad P_{q+1}^{(0)}=F^{-1}T_{q+1}^{(3)}.
\eeq
\item In particular, $e_{2}(I-F)\geq 1/2$ if \rea{ss'}-\rea{ss'-M} and $r_0>1/2$ hold.
\epp
\end{prop}
\begin{proof}
Let $F^{-1}$ be the inverse of $F$ on $\Lambda_q^s$. With $F=I$ on $ \cL P \oplus S \oplus \cL H$, we will use the estimate for $B^{(1)}$
to derive the estimate for $F^{-1}$ via $F^{-1}|_{\cL R^s}=B^{(1)}$.

 We have obtained
$$
f=F^{-1}dT_q^{(3)}f+F^{-1}T_{q+1}^{(3)}df+\Pi_{\cL H}f, \quad f\in\tilde\Lambda^+_q.
$$
On $\cL R^s\oplus \cL S$ we have $T_q^{(3)}=T_q^{(1)}$ and $F=A^{(1)}$. Let $f\in \widetilde{\cL R}^s\oplus \cL S$.  Then by \re{B1dR} we have
$F^{-1}dT_q^{(3)}f=dT_q^{(2)}f=dT_q^{(1)}(B^{(1)})^2dT_q^{(1)}f$. 
The latter equals $dP_q^0f$ since $dT_q^{(1)}f\in\cL Z^s_q\ominus\cL H\subset\cL R^s \oplus \cL S$  
 by  \re{dTq}.
Therefore,
$$ 
F^{-1}d T_q^{(3)}f =d P_q^{(0)}f, 
$$ 
and thus \re{R4} hold on $\widetilde{\cL R} ^s\oplus\cL S$. 
 On $\cL H$, we have $P_q^{(0)}=T_q^{(3)}=0$,  $T_{q+1}^{(3)}d=0$. Thus $F=I$ and \re{R4} holds on $\cL H$. On $\cL P$, we have $T_q^{(3)}=0$ and $T_{q+1}^{(3)}d=I$, so $F=I$.  and \re{R4} holds on $\cL P$.

As mentioned in \rrem{condP}, condition \rea{r0>} is ensured if $ b M\in\Lambda^{3+\e}$. Thus, Assertion $(e)$ holds.
 \end{proof}
Therefore, we have obtained the following estimates for the decomposition \re{R4}.
\begin{prop}\label{prop:Poly} Assume that \rea{ss'}-\rea{ss'-M} and $r_0>1/2$ hold. The homotopy formula \rea{R4} holds on $\tilde \Lambda^+_q$. Furthermore,
\ga{} \label{e2pq}
|P_q^{(0)}f|_r \leq C_{r,\e''}(\|\rho\|_2)
|\rho|^*_{5/2+\e''}(|\rho|^*_{r+3} |\rho|_{2+\e''}^{*\tau}| f|_{\e''}  +|f|_{r}),\quad r>1/2;\\
  e_{3}(P_{q+1}^{(0)}) \geq 1/2,\quad  e_2( H_q) =\infty.
  \label{oldPq+1}
\end{gather}
\end{prop}
\begin{proof}
We have $e_{2}(T_q^{(3)}(F^{-1})^2)\geq 1/2$. Then for $r>0$
\aln{}
&|P_q^{(0)}f|_{r+1/2}\leq C_{r,\e''}
|\rho|_{2+\e''}^{*\tau}|\rho|^*_{r+5/2}|dT_q^{(3)}f|_{\e''}+|dT_q^{(3)}f|_{r}\\
&\qquad\leq C_{r,\e''}'
(|\rho|_{2+\e''}^{*\tau}|\rho|^*_{r+5/2}(|\rho|^*_{3+\e''}|f|_{\e''}+|f|_{1/2+\e''})+|\rho|^*_{r+3}|f|_{\e''}+|f|_{r+1/2}).
\end{align*}
We have $\|\rho\|^*_{r+5/2}\|\rho\|^*_{3+\e''}\leq C_{r,\e''} \|\rho\|^*_{r+3-\e''}\|\rho\|^*_{5/2+2\e''}$ and
$$
|\rho|^*_{r+5/2}|f|_{1/2+\e''}\leq C_{r,\e''}(\|\rho\|^*_{r+3-\e''}\|f\|_{2\e''}+|\rho|^*_{5/2+2\e''}\|f\|_{r+1/2-\e''}). 
$$
Replacing $r+1/2>1/2$ by $r>1/2$ we get \re{e2pq}.

By \re{e52dP}, we have $e_{3}(\Pi_{d\cL P})=\infty$. Since $e_{2}(F^{-1}-1)\geq1/2$ and $e_{2}(T^{(0)}_{q+1})\geq1/2$, \rl{e1e2} implies that $e_{3}(P_{q+1}^{(0)})\geq1/2$.
\end{proof}

\begin{rem}It is clear that on $d\Lambda^{\e}_q$, \re{beta_j} implies that  $\Pi_{\cL P}^-$ does not depend on $\beta_j^{(\e)}$. In fact, $\all_i^{(\e)},\beta_j^{(\e)}, \gamma_\ell^{(\e)}$ do not depend on $\e$ in the sense that
$$
(\all_i^{(\e)},\gamma_\ell^{\e})=(\all_i^{(\e')},\gamma_\ell^{(\e')})|_{\Lambda_q^{\e}}, \quad \beta_j^{(\e)}= \beta_j^{(\e')}|_{d\Lambda_q^\e},
\quad 0<\e'<\e.
$$
Consequently, we can also drop the superscript $(\e)$ in $\all_i^{(\e)},\beta_{j}^{(\e)},\gamma_\ell^{(\e)}$. Then   we still have  estimates
$$
|\all_{i}(f)|\leq C_\e|f|_\e, \quad |\beta_{j}(df)|\leq C_\e|df|_\e,
\quad |\gamma_{\ell}(f)|\leq C_\e|f|_\e
$$
for $f\in\Lambda_q^{\e}$, where $C_\e<\infty$ is independent of $f$ and $df$, respectively.
\end{rem}

With Propositions \ref{value-F} and \ref{prop:Poly} we have adapted Polyakov's construction to our problem. Notice that $P_{q+1}^{(0)}$ gains $1/2$ derivative. However, $P_{q}^{(0)}$ has
no gain in derivatives. In next section, we will find different homotopy operators $P_{q}^{(1)}$ that gains $1/2$ derivative, while
$P_{q+1}^{(1)}$ has no gain in derivatives. This is already useful since $P_{q}^{(1)}$ is a $\db$ solution operator that gains $1/2$ derivatives. Furthermore, we will use the full $a_q$ condition to construct a final version of homotopy operators $P_q$ and $P_{q+1}$ with both gaining $1/2$ derivative.
 
\setcounter{thm}{0}\setcounter{equation}{0}
\section{An improved Polyakov homotopy formula} \label{sect:improve}

To construct a homotopy formula with homotopy operators that have better regularity, we will use the original approximate homotopy formula again. The  scheme is the following: When comparing the new homotopy operators with the original ones, we notice that the difference operators gain derivatives. Therefore, we will use the algebraic properties of the original homotopy operators to modify  a first version of homotopy operators. The ultimate homotopy operators will have the desired regularity.

\medskip

\subsection{Improved $P_q$}   Recall that $A^{(1)}$ is an isomorphism on $\cL R^+\oplus \cL S$ and it has an inverse $B^{(1)}$.
Also $A^{(1)}=A^{(0)}$ on $\cL R^+$. Therefore, $A^{(0)}$ is an isomorphism on $\cL R^+$. Let $B^{(0)}$ be its inverse.
Set
$$
B^{(0)}=I-\hat B^{(0)}, \quad \text{on $\cL R^+$.}
$$
Recall that
$F$ is an isomorphism from $\cL R^ s \oplus\cL S\oplus\cL P\oplus\cL H$ onto itself.
Set
$$
F=I-\hat F, \quad G:= F^{-1}=I-\hat G, \quad G^2=I-\hat G_2, \quad \text{on $\Lambda_q^+$}
$$
Note that
$$ 
\hat G_2=2\hat G-\hat G^2.
$$ 

We now use $dT_q^{(0)}=A^{(0)}-T_{q+1}^{(0)}d$ to modify the homotopy operators. Recall that $ T_q^{(3)}=T^{(0)}_q+ \tilde T^{(3)}_q$ and $P_q^{(0)}=T_q^{(3)}G^2dT_q^{(3)}$. Let us  rearrange the terms as
\ga{}\label{P0q_modify}
P_q^{(0)}= T^{(0)}_qdT_q^{(0)}+T^{(0)}_q\hat G_2T_{q+1}^{(0)}d-\tilde T_q^{(3)}G^2T_{q+1}^{(0)}d 
+\tilde P_q^{(0)}
\end{gather}
with
\al{}
\label{Pti0_exp} \ti P_q^{(0)}&:= \tilde T_q^{(3)}G^2(A^{(0)}+d\tilde T_q^{(3)}) 
-T_q^{(0)}\hat G_2 (A^{(0)}+d\tilde T_q^{(3)}) 
+
T^{(0)}_q d \ti T_q^{(3)}.
\end{align} 
Recall that $P_{q+1}^{(0)}=GT_{q+1}^{(3)}$. We now transport the second and third terms in \re{P0q_modify} and the second term in $T_q^{(0)}dT_q^{(0)}=T_q^{(0)}A^{(0)}-T_q^{(0)}T_{q+1}^{(0)}d$ to $P_{q+1}^{(0)}$.
Therefore,
\al{}\label{Pr1}
    dP_q^{(0)}&+P_{q+1}^{(0)}d
 =dP^{(1)}_q+P_{q+1}^{(1)}d\quad \text{on $\tilde {\Lambda}^s_q$}
 \end{align}
 with
 \al{}
 P^{(1)}_q &:=T_q^{(0)}A^{(0)} +{\tilde P}_q^{(0)},\\
\label{Pq+1}
P^{(1)}_{q+1}&:=d\bl -T_q^{(0)}T_{q+1}^{(0)}
\br+P_{q+1}^{(0)}+\tilde P_{q+1}^{(0)},\\
\tilde P_{q+1}^{(0)}&:=
d( T^{(0)}_q\hat G_2T_{q+1}^{(0)}-\tilde T_q^{(3)}G^2T_{q+1}^{(0)}).  
\label{tpq+1}
\end{align}

To estimate $P_q^{(1)}$, mainly $\tilde P_q^{(0)}$, we must deal with the differential  $d$ that appears in the middle of composition \re{Pti0_exp}. Thus in additional to notation $e_b(T)$, we introduce the following notation:
\begin{defn}We say that $T$  gains {\it exact} $\e$ derivative and write $\tilde e_{b}(T)= \e$, if   
\ga{}\label{defte}
|T f|_{r+\e}\leq C_{b,r,\e''}(\|\rho\|_2) (|\rho|^{*\tau}_{2+\e''}
|\rho|^*_{b+r+\e}|f|_{\e''}+ |f|_{r}), \quad \forall \, r>0.
\end{gather}
Finally, in a slightly weaker form,  we say $\hat e_b(T)=1/2$ if
\ga{}
|T f|_{r+1/2}\leq C_{b,r,\e''}(\|\rho\|_2)(|\rho|^{*\tau}_{2+\e'' }
|\rho|^*_{b+r+1/2}|\rho|^*_{5/2+\e''}|f|_{\e''}+ |f|_{r}), \quad \forall \, r>0.
\end{gather}
\end{defn}

It is obvious that $e_{b}(T)\geq\e$ implies $\tilde e_{b}(T)=\e$.  
\le{tilde-e}
Let  $T_1, T_2$ be linear maps from $\Lambda^+_{q}(\Om,V)$ to $ \Lambda^+_{q'}(\Om,V)$.
Assume that $\tilde e_2(T_1)=1/2$. If
$e_{3}(T_2)\geq1$, then $\hat e_{5/2}(T_1T_2)=1/2$. 
\ele 
\begin{proof} For $r>0$, we have 
\aln{}|T_1T_2f|_{r+1/2}&\leq C_{r,\e''}(|\rho|^{*\tau}_{2+\e''}  |\rho|_{r+5/2}|T_2f|_{\e''}+|T_2f|_{r})\\
&\leq C_{r,\e''}(|\rho|^{*2\tau}_{2+\e''}   |\rho|_{r+5/2}|\rho|_{3+\e''}|f|_{\e''}+|\rho|^{*\tau}_{2+\e''} |\rho|_{r+3}|f|_{\e''}+ |f|_{r}).
\end{align*} 
Then the estimate follows from
\eq{rs52}
|\rho|^*_{r+5/2}|\rho|^*_{3+\e''}\leq C_{r,\e''}|\rho|^*_{r+3-\e''}|\rho|^*_{5/2+2\e''}. \qedhere
\eeq 
\end{proof} 
We now prove the following 
which also improves \re{oldPq+1}.
\le{PQ1} Suppose that \rea{ss'}-\rea{ss'-M} and $r_0>1/2$ hold.  On $\tilde\Lambda_q^+$, we have $I=dP_q^{(1)}+P_{q+1}^{(1)}d+ H_q$. We also have
\ga{}
\label{pq72} 
 \tilde e_{5/2}(\tilde P_q^{(0)})= 1/2,\quad 
\tilde e_{5/2}(P_q^{(1)})=1/2,
\quad   e_2(\cL H_q)=\infty, 
  \\
 \hat e_{5/2}(\tilde P_{q+1}^{(0)})=1/2,\quad 
  \label{hat52} 
   \hat e_{5/2}(P_{q+1}^{(0)})=1/2, 
   \\
\label{e3pq1}     e_3(d( T_q^{(0)}T_{q+1}^{(0)}))\geq0.
\end{gather}
\end{lemma}
\begin{proof}  
To verify \re{pq72}, we recall  
$$
\ti P_q^{(0)}=\tilde T_q^{(3)}G^2(A^{(0)}+d\tilde T_q^{(3)})-T_q^{(0)}\hat G_2 (A^{(0)}+d\tilde T_q^{(3)})+
T^{(0)}_q d \ti T_q^{(3)}.
$$
Recall from \re{tT3_q} that $\tilde T_q^{(3)}=\Pi_{\cL S}^--T_{q}^{(0)}(\Pi_{\cL S}+\Pi_{\cL H}+\Pi_{\cL P})$.
Since $e_2(T_q^{(0)})\geq1/2$ and $e_2(\Pi^-_{\cL S})=e_2(\Pi_{\cL H})=e_2(\Pi_{\cL P})=\infty$, 
by \rl{e1e2} we have $e_2(\tilde T_q^{(3)})=\infty$. By \re{e2F}, we have $e_2(\hat F)\geq1/2$.
Then by \rp{hatB-est}, we get $e_2(\hat G)\geq1/2$ and $e_2(\hat G_2)\geq1/2$. Now it is easy to see that  $e_2(\tilde T_q^{(3)}G^2A^{(0)})=\infty$ and $e_2(T_q^{(0)}\hat G_2 A^{(0)})\geq1/2$.

We have $e_2(T_q^{(0)})\geq2$ and
 $e_3(d\tilde T_q^{(3)})=e_2(\tilde T_q^{(3)})=\infty$. Hence $e_3(d\tilde T_q^{(3)})\geq1$, and by \rl{tilde-e}  we get 
\eq{52hatT}
\hat e_{5/2}(T_q^{(0)}d\tilde T_q^{(3)})=1/2
\eeq
and hence
$\hat e_{5/2}(\tilde P_q^{(0)})=1/2$. 
 We have verified \re{pq72}.
 
We now estimate
 $$
\tilde P_{q+1}^{(0)}=d( T^{(0)}_q\hat G_2T_{q+1}^{(0)}-\tilde T_q^{(3)}G^2T_{q+1}^{(0)})=A^{(0)} \hat G_2T_{q+1}^{(0)}-T_{q+1}^{(0)} d\hat G_2T_{q+1}^{(0)}-
d(\tilde T_q^{(3)}G^2T_{q+1}^{(0)}).
$$
We have $e_2(T_{q+1}^{(0)})\geq1/2$ and $e_2(\hat G_2T_{q+1}^{(0)})\geq1$. Hence, $\hat e_{5/2}(T_{q+1}^{(0)} d\hat G_2T_{q+1}^{(0)})=1/2$ by \rl{tilde-e}. To estimate  $\hat e_{5/2}(d\tilde T_q^{(3)}G^2T_{q+1}^{(0)})$, we recall from  \re{tT3_q} that $$
d\tilde T_q^{(3)}=-dT_{q}^{(0)}(\Pi_{\cL S}+\Pi_{\cL H}+\Pi_{\cL P})+\Pi_{\cL S}=T_{q+1}^{(0)}(d\Pi_{\cL P})-
A^{(0)}(\Pi_{\cL S}+\Pi_{\cL H}+\Pi_{\cL P})+\Pi_{\cL S}.
$$
Therefore, we have
$$
d(\tilde T_q^{(3)}G^2T_{q+1}^{(0)})= \bl \Pi_{\cL S}-A^{(0)}(\Pi_{\cL S}+\Pi_{\cL H}+\Pi_{\cL P}) \br G^2T_{q+1}^{(0)}+T_{q+1}^{(0)}(d\Pi_{\cL P})G^2T_{q+1}^{(0)}.
$$
We have $e_3((d\Pi_{\cL P})G^2T_{q+1}^{(0)})=e_2(\Pi_{\cL P}G^2T_{q+1}^{(0)})\geq1$. We get $\hat e_{5/2}(T_{q+1}^{(0)}(d\Pi_{\cL P})G^2T_{q+1}^{(0)})=1/2$ by \rl{tilde-e}. Now it is easy to see that $\hat e_{5/2}(d(\tilde T_q^{(3)}G^2T_{q+1}^{(0)}))=1/2$ and hence $\hat e_{5/2}( \tilde P_{q+1}^{(0)})=1/2$. By \re{P0P0} and \re{T3_q+1}, we have
$$ 
P_{q+1}^{(0)}=F^{-1}T_{q+1}^{(3)}=F^{-1}(T^{(0)}_{q+1}(I-\Pi_{d\cL P})+\Pi_{\cL P}^-).
$$
Using  $e_3(\Pi_{d\cL P})=\infty$ and \rl{tilde-e}, we get $\hat e_{5/2}(F^{-1}(T^{(0)}_{q+1}(\Pi_{d\cL P})))=1/2$.
Now it is easy to see $\hat e_{5/2}(P_{q+1}^{(0)})=1/2$. We have verified \re{hat52}.

Note that \re{e3pq1} follows from $e_3(d T_q^{(0)}T_{q+1}^{(0)})\geq e_2(d T_q^{(0)}T_{q+1}^{(0)})-1\geq0$.  We cannot improve \re{e3pq1}. In the next two subsections we will transform $d( T_q^{(0)}T_{q+1}^{(0)})$ to get better homotopy operators for the $a_q$ domains. 
\end{proof}

\subsection{Improved $P_q$ and $P_{q+1}$  on convex side of the boundary}

Suppose $bM$ has $n-(q+1)$ positive eigenvalues for $1 \leq q+1 \leq n-1$. Then we have an operator $A^{(0)}_{q+1}=dT^{(0)}_{q+1}+T^{(0)}_{q+2}d$ for the higher degree. In this case, we can improve $P^{(1)}_{q+1}$ in $ d P^{(1)}_q f +P^{(1)}_{q+1} df$. We write \eq{hidden}
dT_q^{(0)}T_{q+1}^{(0)}=A^{(0)}T_{q+1}^{(0)}-T_{q+1}^{(0)}dT_{q+1}^{(0)}=A^{(0)}T_{q+1}^{(0)}
-T_{q+1}^{(0)}A_{q+1}^{(0)}
+ T^{(0)}_{q+1} T_{q+2}^{(0)}d.
\eeq
Thus using $d^2=0$, we obtain
\al{}\label{dP1f} 
 dP^{(1)}_qf+P^{(1)}_{q+1}df&=dP_q^+f+P_{q+1}^{+}df,\quad \forall f\in \tilde\Lambda^+_q, 
 \end{align}
 where
 \al{}\label{Pr1+}
 P_q^{+}&:=P^{(1)}_q,\quad 
 P_{q+1}^{+}=  P_{q+1}^{(0)}+ \tilde P_{q+1}^{(0)}
 -A^{(0)}T_{q+1}^{(0)}+T_{q+1}^{(0)}A_{q+1}^{(0)}.
\end{align}
\begin{rem}\label{density} Strictly speaking, we derive \re{dP1f} via \re{hidden}, where the latter requires $f\in\tilde\Lambda_q^a$ with $a>1/2$.  Then the general case of \re{dP1f} follows from approximating $f\in\tilde\Lambda_q^a$ by smooth forms $f_j$ on $\ov{M}$ such that $f_j-f,df_j-df$ tend $0$ in $\Lambda^\e(M)$ norms (with $0<\e<a$). 
 This approximation holds for the $\db$ operator. 
\end{rem}
 \le{PQ1+} Assume that $M$ and $(n-q)$ convex domains. On $\tilde\Lambda_q^+$, we have $I=dP_q^{+}+P_{q+1}^{+}d+ H_q$. We have
\eq{PP+}
\tilde e_{5/2}(P_q^{+})=1/2,\qquad 
\hat e_{5/2}(P_{q+1}^{+})=1/2,
\qquad e_2(H_q)=\infty.
\eeq
\end{lemma}

\subsection{Improved $P_q$ and $P_{q+1}$  on concave side of the boundary}

Suppose that $bM$ has $q=(q-1)+1$ negative eigenvalues for $0 \leq q-1 \leq n-2$. Then we have an operator $A^{(0)}_{q-1}=d T^{(0)}_{q-1}+T^{(0)}_{q}d$ for the lower degree.
In this case we can improve $P^{(0)}_q$ in the expression $ d P^{(0)}_q f + P^{(0)}_{q+1} df$.

Using $d^2=0$ we write
 $dT_q^{(0)}d = d (A^{(0)}_{q-1} - d T^{(0)}_{q-1}) = dA^{(0)}_{q-1}$. Since $P_q^{(0)} = T^{(0)} d T^{(0)}_q + \ti P_q^{(0)}$, we have
\al{}\label{dPq0}
dP_q^{(0)}f+P_{q+1}^{(0)} df 
&=dP_q^{-}f+P_{q+1}^{-} df,\quad \forall f\in\tilde{\cL R}^+_q
 \end{align}
 where
 \al{}\label{Pr1-}
 P_q^{-}&:= \tilde P_q^{(0)}+A_{q-1}^{(0)}T_q^{(0)},\quad 
 P_{q+1}^{-}:= P^{(0)}_{q+1}+\tilde P_{q+1}^{(0)}.
\end{align}
As  \rrem{density}, strictly speaking \re{dPq0} is derived for $f\in\tilde\Lambda^a_q$ with $a>1/2$.
Then the general case of \re{dPq0} is obtained via the simultaneous smooth approximation of $f,df$. The approximation is satisfied for the $\db$ operator.   Therefore, we have obtained the following.

\le{PQ1-}Suppose that \rea{ss'}-\rea{ss'-M} and $r_0>1/2$ hold. Assume that $M$ and $(q+1)$ concave domain with $q>0$. On $\tilde\Lambda_q^+$, we have $I=dP_q^{-}+P_{q+1}^{-}d+H_q$ and
\eq{pp-}
\tilde e_{5/2}(P_{q}^-)=1/2, \quad  
\hat e_{5/2}(P_{q+1}^-)= 1/2, \quad e_2(H_q)=\infty. 
 \eeq
 \ele

 \subsection{Improved $P_q$ and $P_{q+1}$  for  $a_q$ domains}
 
When the boundary of $M$ has both concave and convex points, we can still modify the homotopy operators.

Let us recall \re{T3_q}-\re{A1_def}:
\al{}\label{T3_q-5}
T^{(3)}_qf&:= T_q^{(0)}f+\tilde T_q^{(3)}f,\quad \forall
f\in\tilde\Lambda^+_q;\\
\label{tT3_q-5}\tilde T_q^{(3)}f&:=T_q^{(0)} (-\Pi_{ P}f-\Pi_{\cL S}f-\Pi_{\cL H}f)+\Pi^-_{\cL S}f;\\
T^{(3)}_{q+1}(g)&:=T^{(0)}_{q+1}(g-\Pi_{d\cL P}g)+\Pi_{\cL P}^-g,\hspace{18ex} \forall g\in\Lambda_{q+1}^+;
\label{T3_q+1-5}
\\ \label{A1_def-5}
A^{(3)}f&:=dT^{(3)}_qf +T^{(3)}_{q+1}df,\quad F(f):=A^{(3)}f+\Pi_{\cL H}f,\quad \forall
f\in\tilde\Lambda^+_q.
\end{align}
We also recall \re{R4}-\re{P0P0}: 
\ga{}\label{R4-5}
f=dP_q^{(0)}f+P_{q+1}^{(0)}df+\Pi_{\cL H}f,\\
\label{P0P0-5}
P_q^{(0)}= T_q^{(3)}(F^{-1})^2d T_q^{(3)}, \quad P_{q+1}^{(0)}=F^{-1}T_{q+1}^{(3)}.
\end{gather}

Write
$$ 
P_q^{(0)}=T^{(0)}_qdT_q^{(0)}-\tilde T_q^{(3)}G^2T_{q+1}^{(0)}d+T_q^{(0)}\hat G_2T_{q+1}^{(0)}d+\tilde P_q^{(0)}
x$$ 
with
\al{}
\label{Pti0_exp-last} 
\ti P_q^{(0)}&=\tilde T_q^{(3)}G^2(A^{(0)}+d\tilde T_q^{(3)}) 
-T_q^{(0)}\hat G_2(A^{(0)}+d\tilde T_q^{(3)})c
+
T^{(0)}_q d \ti T_q^{(3)}. 
\end{align}
Then we have 
\begin{align}\label{dPq}
 d P^{(1)}_q &= d \bl T^{(0)}_q A^{(0)}-\tilde T_q^{(3)}G^2T_{q+1}^{(0)}d+T_q^{(0)}\hat G_2T_{q+1}^{(0)}d  + \ti P^{(0)}_q \br
  \\&= d \bl T^{(0)}_q d T^{(0)}_q + T^{(0)}_q T^{(0)}_{q+1} d -\tilde T_q^{(3)}G^2T_{q+1}^{(0)}d+T_q^{(0)}\hat G_2T_{q+1}^{(0)}d +  \ti P^{(0)}_q\br.\nonumber
\end{align}
Note that the operators $T_q^{(0)}, T_{q+1}^{(0)}$ are constructed via the same partition of unity $\{\chi_\ell\}$ of $\ov M$, given by  \re{T0_exp}:
\eq{T0_exp-5}
  T^{(0)}_{\tilde q} = \sum_k \chi_k T^+_{\tilde q,U_k} + \sum_\ell \chi_\ell T^-_{\tilde q,U_\ell},\quad\tilde q=q,q+1,
\eeq
where we split the operators into those for  $T^-_{i,U_\ell}$ with $U_\ell\cap b^-_{q+1}\neq\emptyset$ and 
 $T^+_{i,U_k}$ for the rest,  where the latter also includes
  $T_{m,U_m}$ that  gains  a full derivative on H\"older-Zygmund spaces in the interior of $U_m$ when $U_m\cap bM=\emptyset$. Here we have used a result in~\ci{MR999729}.

The presence of cut-off functions $\chi_k$ in \re{T0_exp-5} guarantees that each term in the summand \re{T0_exp-5} is globally defined on $\ov M$.  Therefore, we can transport individual term maintaining new formulas to be globally defined, after they are modified from the ones in previous sections. 

We start with
\begin{gather*}
 d T^{(0)}_q d T^{(0)}_q  = d \Bl \sum_k \chi_k T^+_{q,U_k}   d T^{(0)}_q \Br + d \Bl \sum_\ell \chi_\ell T^-_{q,U_\ell} dT^{(0)}_q \Br,
 \\
d T^{(0)}_q T^{(0)}_{q+1} d = d \Bl \sum_k \chi_k T^+_{q,U_k}  T^{(0)}_{q+1} d \Br  + d \Bl  \sum_\ell \chi_\ell T^-_{q,U_\ell}T^{(0)}_{q+1} d \Br .
\end{gather*}
Since $A^{(0)} = d T^{(0)}_q + T^{(0)}_{q+1} d$, we have
\al{} \label{dTdT+dTTd}
& d T^{(0)}_q d T^{(0)}_q + d T^{(0)}_q T^{(0)}_{q+1} d
    \\ &\quad = d \Bl \sum_k \chi_k T^+_{q,U_k} A^{(0)} \Br
+ d \Bl \sum_\ell \chi_\ell T^-_{q,U_\ell} dT^{(0)}_q \Br + d \Bl  \sum_\ell \chi_\ell T^-_{q,U_\ell}T^{(0)}_{q+1} d \Br.
\nonumber
\end{align} 
By \re{Pq+1}, we get
\begin{equation} \label{Pq+1_d}
 P^{(1)}_{q+1} d = \bl P_{q+1}^{(0)} +\tilde P^{(0)}_{q+1} \br d 
 -dT_q^{(0)}T_{q+1}^{(0)} d .
\end{equation}
where $P_{q+1}^{(0)}$ and $\tilde P^{(0)}_{q+1}$ are defined by \re{P0P0} and \re{tpq+1}, respectively.
We decompose the term $dT_q^{(0)}T_{q+1}^{(0)} d$ as
\begin{align} \label{-dTTd} 
  dT_q^{(0)}T_{q+1}^{(0)} d &=  d \Bl  \sum_k \bl \chi_k T^+_{q,U_k} \br T^{(0)}_{q+1} d \Br + d \Bl  \sum_\ell \bl \chi_\ell T^-_{q,U_\ell} \br T^{(0)}_{q+1} d \Br
  \\&=  d \Bl \sum_{k,m} \bl \chi_k T^+_{q,U_k} \br \chi_m
 T^+_{q+1,U_m} d \Br + d \Bl \sum_\ell \bl \chi_\ell T^-_{q,U_\ell} \br T^{(0)}_{q+1} d \Br
 \nonumber\\
& \quad  +\Bl d\sum_{k,m} \bl \chi_k T^+_{q,U_k} \br \chi_m
 T^-_{q+1,U_m}  \Br d.
 \nonumber
\end{align}
Let us first treat the terms in the last line. When $U_k$ intersects $b^+_{n-q}( b M)$ and $U_m$ intersects $b^-_{q+1}M$, we have $U_k \cap U_m  = \emptyset$. Since  $\supp\chi_\ell\Subset U_\ell$, we may assume that for the non-zero terms in the last line of \re{-dTTd}, we have $U_k\Subset M$ and hence this   $\chi_k T^+_{q,U_k}$ gains a full derivative.  Then
\begin{align*}
   |d (\chi_k T^+_{q,U_k} \chi_m
 T^-_{q+1,U_m})g|_{r+\yh} 
   &\leq |(\chi_k T^+_{q,U_k} \chi_m
 T^-_{q+1,U_m})g|_{r+\frac32} 
  \\
   &\leq C_r| T^-_{q+1,U_m} g|_{r+\yh} \leq C_{r,\e}(|\rho|_{2}) (|\rho|^*_{r+2+\yh} |f|_\e + |f|_r).  
\end{align*}
This  shows that
\ga 
 e_2(P_{q+1}^*)\geq1/2, \quad P_{q+1}^*:= d\sum_{k,m}  \chi_k T^+_{q,U_k}  \chi_m
 T^-_{q+1,U_m}.
 \label{e3Ps}
\end{gather}

By combining \re{dPq}-\re{-dTTd} we get
\begin{align*}
  d P^{(1)}_q +(P^{(1)}_{q+1}&+P^{*}_{q+1}) d = d \Bl \sum_k \chi_k T^+_{q,U_k} A^{(0)} + \ti P^{(0)}_q \Br + \bl P_{q+1}^{(0)} +\tilde P^{(0)}_{q+1} \br d
  \\ &\quad + d \sum_\ell \chi_\ell T^-_{q,U_\ell} dT^{(0)}_q - d \Bl \sum_{k,m} \bl \chi_k T^+_{q,U_k} \br \bl \chi_m T^+_{q+1,U_m} \br d \Br.
\end{align*}
We now use the convexity/concavity to modify the last two terms in the above line.
Since $T_{\tilde q, U_\ell}^-$ satisfies $U_\ell\cap b^-_{p+1}\neq\emptyset$ and $q\geq1$, we have
$T^-_{q,U_\ell} d = A^-_{q-1,U_\ell} - d T^-_{q-1, U_\ell}$ and thus
\begin{align*}
 d \sum_\ell \chi_\ell T^-_{q,U_\ell}& dT^{(0)}_q
= d \sum_\ell \chi_\ell (A^-_{q-1,U_\ell} - d T^-_{q-1, U_\ell}) T^{(0)}_q
\\ \qquad &= d \sum_\ell \chi_\ell A^-_{q-1,U_\ell} T^{(0)}_q - d \left[ d \bl \chi_\ell T^-_{q-1, U_\ell} T^{(0)}_q \br - (d\chi_\ell)
\wedge T^-_{q-1, U_\ell} T^{(0)}_q \right]
\\ \qquad &= d \Bl \sum_\ell \chi_\ell A^-_{q-1,U_\ell} T^{(0)}_q
+ (d \chi_\ell)\wedge T^-_{q-1, U_\ell} T^{(0)}_q \Br.
\end{align*}
Next, we modify each term in
\[
  d \Bl \sum_{k,m} \bl \chi_k T^+_{q,U_k} \br \bl \chi_m T^+_{q+1,U_m} \br d \Br.
\]
We have operator $A^+_{q+1,U_m}=dT^+_{q+1,U_m}+T^+_{q+2,U_m}d$, where both $T^+_{q+2,U_m}, T^+_{q+1, U_m}$ gain at least $1/2$ derivative. Then we have
\aln{}
  d \Bl \bl \chi_k T^+_{q,U_k} \br \bl \chi_m T^+_{q+1,U_m} \br d \Br
  &=   \chi_k dT^+_{q,U_k}\chi_m T^+_{q+1,U_m}d+E_{1;km}
\\
&= - \chi_k T^+_{q+1,U_k}d(\chi_m T^+_{q+1,U_m})d+E_{2;km}+E_{1;km},\\
&=- \chi_k T^+_{q+1,U_k}\chi_m dT^+_{q+1,U_m}d+E_{3;km}+E_{2;km}+E_{1;km},\\
&=E_{4;km}+E_{3;km}+E_{2;km}+E_{1;km}
\end{align*}
\begin{gather*}
    E_{1;km}:=(d\chi_k)\wedge T^+_{q,U_k}\chi_m T^+_{q+1,U_m}d, \quad
E_{2;km}:=\chi_k A^+_{q,U_k}\chi_m T^+_{q+1,U_m}d,\\
E_{3;km} :=\chi_k T^+_{q+1,U_k}((d\chi_m)\wedge T^+_{q+1,U_m})d, \quad
E_{4;km}:= - \chi_k T^+_{q+1,U_k}\chi_mA^+_{q+1,U_m}d.
\end{gather*}
Therefore, we can write each term in $d(\chi_k T_{q,U_\ell}\chi_mT_{q+1,U_m})d$ as $\tilde E_{j;km}d$ where $\tilde E_{j;km}$ gains $1/2$ derivative; more precisely the term vanishes when $U_k\cap U_m=\emptyset$. When the intersection is non-empty, we have
\eq{e2tE}
e_2(\tilde E_{j;km})\geq1/2.
\eeq

To summarize, we have derived the following identities:
\begin{gather} \nn
  d P^{(0)}_q + P^{(0)}_{q+1} d = d P_q + P_{q+1} d ;
\\ \label{Pq_exp} P_q := \sum_k \chi_k T^+_{q,U_k} A^{(0)} + \ti P^{(0)}_q
+ \sum_\ell \chi_\ell A^-_{q-1,U_\ell} T^{(0)}_q
+ d \chi_\ell\wedge T^-_{q-1, U_\ell} T^{(0)}_q ;
  \\ \label{Pq+1_exp}  P_{q+1} := P_{q+1}^{(0)} +\tilde P^{(0)}_{q+1}
  - \sum_{j,k,m} \ti E_{j;km}-d\sum_{k,m}  \chi_k T^+_{q,U_k}  \chi_m
 T^-_{q+1,U_m}.
\end{gather} 
Here $\tilde P_q^{(0)}$ is  defined by \re{Pti0_exp-last}, and $P_{q+1}^{(0)}, \tilde P_{q+1}^{(0)}$
are defined by  \re{P0P0},  \re{T3_q+1}, and \re{tpq+1}. Then \re{R4} is transformed into 
$$
f=d P_q f+ P_{q+1} d f+H_qf,
$$
 where  $H_q$ is the projection $\Pi_{\cL H}$ from $\Lambda_{q}^{\e''}$ onto $\cL H_q$  with $\cL H_q=\cL H$
being defined by \re{defSH}  in \rl{lemma:KSH}.

 We now state a homotopy formula with both $P_q$ and $P_{q+1}$ gaining $1/2$ derivative.
\begin{thm}\label{defAB+} 
Let $V$ be a holomorphic vector bundle on $\cL M$.
Let
$M$ defined by $ \rho<0$ be a relatively compact $a_q$ domain in $\cL M$ with $0<q\leq n-2$.
Let $P_q, P_{q+1}, H_q$ be defined by \rea{Pq_exp}-\rea{Pq+1_exp}. Let  $r>0$ and $ b M\in\Lambda^{r+3}$. 
For all $f\in \Lambda_q^r(M,V)$ with  $d f\in\Lambda^{r}$, we have
$$ 
f=dP_q f+  P_{q+1}df+H_qf,
$$ 
where $H_qf\in\cL H_q$. Moreover, we have
\al{}
\label{gainP}
|P_{q}g|_{r+1/2}&\leq  C^*_{2,\e}  C_{r,\e}(|\rho|^*_{2+\e})  (|g|_{r}+
|\rho|^*_{r+3}|\rho|^{*\tau}_{2+\e}|g|_{\e}),
\\
|P_{q+1}g|_{r+1/2}&\leq  C_{2,\e}^{*}  C_{r,\e}(|\rho|^*_{2+\e}) ( |g|_{r}+
|\rho|^*_{r+3} |\rho|^{*}_{5/2+\e}
|\rho|^{*\tau}_{2+\e}|g|_{\e}),\\
|H_qf|_{r+1/2}&\leq   C^*_{2,\e}  C_{r,\e,\e}(|\rho|^*_{2+\e})
|\rho|^*_{r+5/2} |\rho|^{*\tau}_{2+\e}|f|_{\e},
\end{align}
where $\e>0$,  and  $C_{2,\e}^*$ 
depends on the norm of the inverse map of $F$ and the projections $\Pi_{\cL S}, \Pi_{\cL H}, \Pi_{\cL P}$ $($resp.~$\Pi_{d\cL P}$ additionally$)$ in $\Lambda^{\e}$ norms. 
\end{thm}
\begin{proof} The estimates are   consequences of Lemmas~\ref{PQ1+} and \ref{PQ1-}, estimates \re{e3Ps}-\re{e2tE},
and \rl{e1e2}  on compositions.
\end{proof}
\begin{rem} The constants $\tau,  C_{r,\e}(|\rho|^*_{2+\e})$ are stable under small $\Lambda^{2+\e}$ perturbations of $ b M$.
  However, we have no claim on the stability of $C_{2, \e}^*$ under small $\Lambda^{j+\e}$ perturbations of $ b M$ shown in \re{B1est} for the inverse of $A^{(0)}$ on $\cL R^\e$. 
   Also, if $b_{n-q}^+M$ is empty, then \re{gainP} also holds for $P_{q+1}$.
\end{rem}

\newcommand{\doi}[1]{\href{http://dx.doi.org/#1}{doi:#1}}
\newcommand{\arxiv}[1]{\href{https://arxiv.org/pdf/#1}{arXiv:#1}}

  \def\MR#1{\relax\ifhmode\unskip\spacefactor3000 \space\fi%
  \href{http://www.ams.org/mathscinet-getitem?mr=#1}{MR#1}}

\nocite{}
\bibliographystyle{alpha}


\begin{bibdiv}
\begin{biblist}

\bib{MR2574722}{article}{
   author={Br\"onnle, T.},
   author={Laurent-Thi\'ebaut, C.},
   author={Leiterer, J.},
   title={Global homotopy formulas on $q$-concave CR manifolds for large
   degrees},
   journal={J. Geom. Anal.},
   volume={20},
   date={2010},
   number={1},
   pages={90--106},
   issn={1050-6926},
   review={\MR{2574722}},
   doi={10.1007/s12220-009-9100-0},
}

\bib{MR0461588}{book}{
   author={Folland, G.B.},
   author={Kohn, J.J.},
   title={The Neumann problem for the Cauchy-Riemann complex},
   series={Annals of Mathematics Studies},
   volume={No. 75},
   publisher={Princeton University Press, Princeton, NJ; University of Tokyo
   Press, Tokyo},
   date={1972},
   pages={viii+146},
   review={\MR{0461588}},
}

\bib{GG}{article}{
   author={Gan, C.},
   author={Gong, X.},
   title={Global Newlander-Nirenberg theorem for domains with $C^2$
   boundary},
   journal={Michigan Math. J.},
   volume={74},
   date={2024},
   number={2},
   pages={283--329},
   issn={0026-2285},
   review={\MR{4739840}},
   doi={10.1307/mmj/20216084},
}

\bib{MR3961327}{article}{
      author={Gong, X.},
       title={H\"{o}lder estimates for homotopy operators on strictly
  pseudoconvex domains with {$C^2$} boundary},
        date={2019},
        ISSN={0025-5831},
     journal={Math. Ann.},
      volume={374},
      number={1-2},
       pages={841\ndash 880},
  url={https://doi-org.ezproxy.library.wisc.edu/10.1007/s00208-018-1693-9},
      review={\MR{3961327}},
}

\bib{MR4058177}{article}{
      author={Gong, X.},
       title={A Frobenius-Nirenberg theorem with parameter},
        date={2020},
     journal={ J. Reine Angew. Math.},
       volume={759},
       pages={101\ndash 159},
      review={\MR{4058177}},

}

 \bib{MR4866351}{article}{
   author={Gong, X.},
   title={On regularity of $\overline\partial$-solutions on $a_q$ domains
   with $C^2$ boundary in complex manifolds},
   journal={Trans. Amer. Math. Soc.},
   volume={378},
   date={2025},
   number={3},
   pages={1771--1829},
   issn={0002-9947},
   review={\MR{4866351}},
   doi={10.1090/tran/9315},
}

\bib{MR4289246}{article}{
   author={Gong, X.},
   author={Lanzani, L.},
   title={Regularity of a $\overline\partial$-solution operator for strongly
   $\bf C$-linearly convex domains with minimal smoothness},
   journal={J. Geom. Anal.},
   volume={31},
   date={2021},
   number={7},
   pages={6796--6818},
   issn={1050-6926},
   review={\MR{4289246}},
   doi={10.1007/s12220-020-00443-w},
}

 \bib{GS-nn}{article}{
   author={Gong, X.},
      author={Shi, Z.}, 
title={Global Newlander-Nirenberg theorem on domains with finite smooth boundary in complex manifolds}, 
      year={2024},
      eprint={2410.09334},
      archivePrefix={arXiv},
      primaryClass={math.CV},
      url={https://arxiv.org/abs/2410.09334}, 
}

\bib{MR2829316}{article}{
      author={Gong, X.},
      author={Webster, S.M.},
       title={Regularity for the {CR} vector bundle problem {II}},
        date={2011},
        ISSN={0391-173X},
     journal={Ann. Sc. Norm. Super. Pisa Cl. Sci. (5)},
      volume={10},
      number={1},
       pages={129\ndash 191},
      review={\MR{2829316}},
}

\bib{MR2868966}{article}{
      author={Gong, X.},
      author={Webster, S.M.},
       title={Regularity in the local {CR} embedding problem},
        date={2012},
        ISSN={1050-6926},
     journal={J. Geom. Anal.},
      volume={22},
      number={1},
       pages={261\ndash 293},
  url={https://doi-org.ezproxy.library.wisc.edu/10.1007/s12220-010-9192-6},
      review={\MR{2868966}},
}

\bib{MR0477158}{article}{
      author={Hamilton, R.S.},
       title={Deformation of complex structures on manifolds with boundary.
  {I}. {T}he stable case},
        date={1977},
        ISSN={0022-040X},
     journal={J. Differential Geometry},
      volume={12},
      number={1},
       pages={1\ndash 45},
  url={http://projecteuclid.org.ezproxy.library.wisc.edu/euclid.jdg/1214433844},
      review={\MR{0477158}},
}

\bib{MR594711}{article}{
      author={Hamilton, R.S.},
       title={Deformation of complex structures on manifolds with boundary.
  {II}. {F}amilies of noncoercive boundary value problems},
        date={1979},
        ISSN={0022-040X},
     journal={J. Differential Geom.},
      volume={14},
      number={3},
       pages={409\ndash 473 (1980)},
         url={http://projecteuclid.org/euclid.jdg/1214435106},
      review={\MR{594711}},
}

\bib{MR0179443}{article}{
      author={H\"{o}rmander, L.},
       title={{$L^{2}$} estimates and existence theorems for the {$\bar
  \partial $}\ operator},
        date={1965},
        ISSN={0001-5962},
     journal={Acta Math.},
      volume={113},
       pages={89\ndash 152},
         url={https://doi.org/10.1007/BF02391775},
      review={\MR{0179443}},
}

\bib{MR0602181}{article}{
      author={H\"{o}rmander, L.},
       title={The boundary problems of physical geodesy},
        date={1976},
        ISSN={0003-9527},
     journal={Arch. Rational Mech. Anal.},
      volume={62},
      number={1},
       pages={1\ndash 52},
         url={https://doi-org.ezproxy.library.wisc.edu/10.1007/BF00251855},
      review={\MR{0602181}},
}

\bib{MR0153030}{article}{
      author={Kohn, J.J.},
       title={Harmonic integrals on strongly pseudo-convex manifolds. {I}},
        date={1963},
        ISSN={0003-486X},
     journal={Ann. of Math. (2)},
      volume={78},
       pages={112\ndash 148},
         url={https://doi-org.ezproxy.library.wisc.edu/10.2307/1970506},
      review={\MR{0153030}},
}

\bib{MR0208200}{article}{
      author={Kohn, J.J.},
       title={Harmonic integrals on strongly pseudo-convex manifolds. {II}},
        date={1964},
        ISSN={0003-486X},
     journal={Ann. of Math. (2)},
      volume={79},
       pages={450\ndash 472},
         url={https://doi-org.ezproxy.library.wisc.edu/10.2307/1970404},
      review={\MR{0208200}},
}

\bib{MR177135}{article}{
   author={Kohn, J.J.},
   author={Rossi, H.},
   title={On the extension of holomorphic functions from the boundary of a
   complex manifold},
   journal={Ann. of Math. (2)},
   volume={81},
   date={1965},
   pages={451--472},
   issn={0003-486X},
   review={\MR{177135}},
   doi={10.2307/1970624},
}

\bib{MR1621967}{article}{
   author={Laurent-Thi\'ebaut, Ch.},
   author={Leiterer, J.},
   title={The Andreotti-Vesentini separation theorem and global homotopy
   representation},
   journal={Math. Z.},
   volume={227},
   date={1998},
   number={4},
   pages={711--727},
   issn={0025-5874},
   review={\MR{1621967}},
   doi={10.1007/PL00004401},
}

\bib{MR2393269}{article}{
   author={Laurent-Thi\'ebaut, Ch.},
   author={Leiterer, J.},
   title={Global homotopy formulas on $q$-concave CR manifolds for small
   degrees},
   journal={J. Geom. Anal.},
   volume={18},
   date={2008},
   number={2},
   pages={511--536},
   issn={1050-6926},
   review={\MR{2393269}},
   doi={10.1007/s12220-008-9020-4},
}

\bib{MR1453619}{article}{
   author={Leiterer, J.},
   title={From local to global homotopy formulas for $\overline \partial$
   and $\overline \partial_b$},
   conference={
      title={Geometric complex analysis},
      address={Hayama},
      date={1995},
   },
   book={
      publisher={World Sci. Publ., River Edge, NJ},
   },
   isbn={981-02-2439-7},
   date={1996},
   pages={385--391},
   review={\MR{1453619}},
}

\bib{MR1263172}{article}{
   author={Ma, L.},
   author={Michel, J.},
   title={Regularity of local embeddings of strictly pseudoconvex CR
   structures},
   journal={J. Reine Angew. Math.},
   volume={447},
   date={1994},
   pages={147--164},
   issn={0075-4102},
   review={\MR{1263172}},
   doi={10.1515/crll.1994.447.147},
}

\bib{MR88770}{article}{
      author={Newlander, A.},
      author={Nirenberg, L.},
       title={Complex analytic coordinates in almost complex manifolds},
        date={1957},
        ISSN={0003-486X},
     journal={Ann. of Math. (2)},
      volume={65},
       pages={391\ndash 404},
         url={https://doi.org/10.2307/1970051},
      review={\MR{88770}},
}

\bib{MR2088929}{article}{
      author={Polyakov, P.L.},
       title={Versal embeddings of compact 3-pseudoconcave {CR} submanifolds},
        date={2004},
        ISSN={0025-5874},
     journal={Math. Z.},
      volume={248},
      number={2},
       pages={267\ndash 312},
  url={https://doi-org.ezproxy.library.wisc.edu/10.1007/s00209-004-0598-3},
      review={\MR{2088929}},
}

\bib{MR2077422}{article}{
   author={Polyakov, P.L.},
   title={Global $\overline\partial_{\rm M}$-homotopy with $C^k$ estimates
   for a family of compact, regular $q$-pseudoconcave CR manifolds},
   journal={Math. Z.},
   volume={247},
   date={2004},
   number={4},
   pages={813--862},
   issn={0025-5874},
   review={\MR{2077422}},
   doi={10.1007/s00209-004-0648-x},
}

\bib{MR4244873}{article}{
      author={Shi, Z.},
       title={Weighted {S}obolev {$L^p$} estimates for homotopy operators on
  strictly pseudoconvex domains with {$C^2$} boundary},
        date={2021},
        ISSN={1050-6926},
     journal={J. Geom. Anal.},
      volume={31},
      number={5},
       pages={4398\ndash 4446},
  url={https://doi-org.ezproxy.library.wisc.edu/10.1007/s12220-020-00438-7},
      review={\MR{4244873}},
}

\bib{shiNN}{article}{
      title={On $1/2$ estimate for global Newlander-Nirenberg theorem},
      author={Shi, Z.},
      year={2024},
      archivePrefix={arXiv},
      eprint={https://arxiv.org/abs/2301.02215},
      primaryClass={math.CV},
      note={accepted by Math. Ann.},
}

 \bib{MR4853035}{article}{
   author={Shi, Z.},
   title={On $1/2$ estimate for global Newlander-Nirenberg theorem},
   journal={Math. Ann.},
   volume={391},
   date={2025},
   number={2},
   pages={2943--2979},
   issn={0025-5831},
   review={\MR{4853035}},
   doi={10.1007/s00208-024-02988-5},
}

\bib{MR4688544}{article}{
   author={Shi, Z.},
   author={Yao, L.},
   title={A solution operator for the $\overline\partial$ equation in
   Sobolev spaces of negative index},
   journal={Trans. Amer. Math. Soc.},
   volume={377},
   date={2024},
   number={2},
   pages={1111--1139},
   issn={0002-9947},
   review={\MR{4688544}},
   doi={10.1090/tran/9066},
}

\bib{SYajm}{article}{
      author={Shi, Z.},
   author={Yao, L.},
       title={Sobolev $\frac{1}{2}$ estimates for $\overline{\partial}$
  equations on strictly pseudoconvex domains with $C^2$ boundary},
   note={accepted by Amer. J. Math.},
        date={2021},
      archivePrefix={arXiv},
        eprint={https://arxiv.org/abs/2107.08913},
}

\bib{MR999729}{article}{
      author={Webster, S.M.},
       title={A new proof of the {N}ewlander-{N}irenberg theorem},
        date={1989},
        ISSN={0025-5874},
     journal={Math. Z.},
      volume={201},
      number={3},
       pages={303\ndash 316},
         url={https://doi-org.ezproxy.library.wisc.edu/10.1007/BF01214897},
      review={\MR{999729}},
}

\bib{MR995504}{article}{
      author={Webster, S.M.},
       title={On the proof of {K}uranishi's embedding theorem},
        date={1989},
        ISSN={0294-1449},
     journal={Ann. Inst. H. Poincar\'{e} Anal. Non Lin\'{e}aire},
      volume={6},
      number={3},
       pages={183\ndash 207},
         url={http://www.numdam.org/item?id=AIHPC_1989__6_3_183_0},
      review={\MR{995504}},
}

\end{biblist}
\end{bibdiv}

\end{document}